\def\draft{0}
\def\done{1}
\title{Khovanov-Rozansky Homology and Conway Mutation}
\author{Thomas C. Jaeger}
\address{Department of Mathematics;
Michigan State University;
East Lansing, MI 48824}
\email{tjaeger@math.msu.edu}
\newtheorem{theorem}{Theorem}[section]
\newcommand\newthm[2]{
\newaliascnt{#1}{theorem}
\newtheorem{#1}[#1]{#2}
\aliascntresetthe{#1}
\expandafter\newcommand\csname #1autorefname\endcsname{#2}
}
\theoremstyle{definition}
\tikzstyle{under} = [above=-1.35em]
\tikzstyle{upper} = [preaction={draw=white, -, line width=4pt}]
\tikzstyle{ul} = [anchor=south east, text height=1.5ex, text depth=0.25ex, yshift=-0.25ex, inner sep=0.1em]
\tikzstyle{ur} = [anchor=south west, text height=1.5ex, text depth=0.25ex, yshift=-0.25ex, inner sep=0.1em]
\tikzstyle{ll} = [anchor=north east, text height=1.5ex, text depth=0.25ex, inner sep=0.1em]
\tikzstyle{lr} = [anchor=north west, text height=1.5ex, text depth=0.25ex, inner sep=0.1em]
\tikzstyle{cl} = [anchor= east, text height=1.5ex, text depth=0.25ex]
\tikzstyle{cr} = [anchor= west, text height=1.5ex, text depth=0.25ex]
\tikzstyle{t} = [anchor=south, text height=1.5ex, text depth=0.25ex]
\tikzstyle{b} = [anchor=north, text height=1.5ex, text depth=0.25ex]
\DeclareMathOperator{\Hom}{Hom}
\DeclareMathOperator{\Cone}{Cone}
\DeclareMathOperator{\im}{im}
\newcommand{\id}{\mathrm{id}}
\newcommand\mmatrix[4]{\begin{pmatrix} #1 & #2 \\ #3 & #4 \end{pmatrix}}
\newcommand\smatrix[1]{\scalebox{0.8}{$\begin{pmatrix}#1\end{pmatrix}$}}
\newcommand\koszul[1]{\begin{Bmatrix} #1 \end{Bmatrix}}
\newcommand\mf[4]{\xymatrix{#1 \ar@<0.4ex>[r]^-{#3} & #2 \ar@<0.4ex>[l]^-{#4}}}
\newcommand\mysl[1]{\mathfrak{sl}(#1)}
\newcommand\sln{\mysl n}
\newcommand\mfshift{\langle1\rangle}
\newcommand\hmf{hmf}
\newcommand\KR{Khovanov-Rozansky }
\newcommand\Z{\mathbb Z}
\newcommand\Q{\mathbb Q}
\newcommand\K{\mathrm K^{b}}
\newcommand\FF{\mathcal F}
\newcommand\GG{\mathcal G}
\newcommand\CC{\mathcal C}
\newcommand\inv{^{-1}}
\newcommand\ce{\colonequals}
\begin{document}

\begin{abstract}
We show that the reduced $\sln$ homology defined by Khovanov and
Rozansky is invariant under component-preserving positive mutation when $n$ is
odd.
\end{abstract}

\maketitle

\section{Introduction}

Understanding the behavior of Khovanov homology under Conway mutation has been
an active area of study.  Wehrli~\cite{wehrli1} demonstrated that unlike the
Jones polynomial, Khovanov homology detects mutation of links.
Bar-Natan~\cite{bar-natan-mutation} showed that for a pair of mutant knots (or,
more generally, two links that are related by component-preserving mutation)
there are two spectral sequences with identical $E_2$ pages converging to the
Khovanov homologies of the knots.  Champanerkar and
Kofman~\cite{champanerkar-kofman} relate Khovanov homology to a
(mutation-invariant) matroid obtained from the Tait graph of a knot diagram.
The question remains open, but with coefficients in $\Z_2$ it was solved
independently by Bloom~\cite{bloom} and Wehrli~\cite{wehrli2}.  In fact, Bloom
proves the more general result that odd Khovanov homology (see Ozsv\'ath,
Rasmussen and Szab\'o~\cite{odd-khovanov}) is invariant under arbitrary
mutation of links.  A similar statement cannot hold for the original Khovanov
homology, as we know from Wehrli's example in \cite{wehrli1}.  Recently,
Kronheimer, Mrowka and Ruberman~\cite{instanton} showed that the total rank of
instanton knot homology is invariant under genus-$2$ mutation, which implies
invariance under Conway mutation.

In this paper, we investigate the effect of mutation on $\sln$ homology, a
generalization of Khovanov homology.  $\sln$ homology, defined by Khovanov and
Rozansky in \cite{khr1}, is a categorification of the $\sln$ polynomial, a
certain specialization of the HOMFLY-PT polynomial that can be obtained from
the fundamental $n$-dimensional representation of $U_q(\sln)$.
As noted implicitly by Gornik~\cite{gornik} and later used by Rasmussen in
\cite{rasmussen} (see also Krasner~\cite{krasner-equivariant} and Wu~\cite{wu-s}), the
definitions make sense in a more general context:
To any polynomial $p \in \Q[x]$, one can assign a homology theory that
conjecturally only depends on the multiplicities of the (complex) roots of
$p'(x)$.  $\sln$ homology is recovered by setting $p(x) = \frac1{n+1}x^{n+1}$.
For odd $n$, we establish invariance under positive mutation, that is mutation
that respects the orientations of both $2$-tangles involved in it.

\begin{theorem}\label{main}
If $L$ and $L'$ are two links related by component-preserving positive mutation
and $n$ is odd, then their reduced $\sln$ homologies are isomorphic (reduced
with respect to the component of the mutation, to be defined in
\autoref{section-definitions}).  More generally, let $p(x) = \sum_k a_{2k}
x^{2k}$ be a polynomial with only even powers of $x$, then the reduced \KR
homologies of $L$ and $L'$ associated to this polynomial are isomorphic.
\end{theorem}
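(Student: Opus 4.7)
The strategy is to prove the more general statement about polynomials $p(x) = \sum_k a_{2k} x^{2k}$ (the $\sln$ case for odd $n$ is recovered by $p(x) = \frac1{n+1} x^{n+1}$, since $n+1$ is even). The idea is to work at the level of matrix factorizations assigned to $2$-tangles and to exhibit an isomorphism between the invariants of mutant tangles that becomes realizable after reduction with respect to the mutation component.

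\textbf{Setup.} To a $2$-tangle $T$ whose four boundary endpoints lie on the boundary of a $3$-ball, the Khovanov--Rozansky formalism assigns a complex of matrix factorizations $F(T) \in \K(\hmf)$ over the ring $R = \Q[x_1,x_2,x_3,x_4]$ with potential $w = \sum_i \epsilon_i\, p(x_i)$, where $\epsilon_i = \pm 1$ records the orientation at the $i$-th endpoint. If $L$ is obtained from $T$ by gluing on a complementary tangle $T^c$, then the reduced \KR homology of $L$ is a certain pairing of $F(T)$ with $F(T^c)$. It therefore suffices to produce an isomorphism $F(T) \cong F(T')$ in the appropriate reduced setting, where $T'$ is the mutant of $T$.

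\textbf{The key symmetry.} Because $p$ has only even powers of $x$, the involution $\iota\colon x_i \mapsto -x_i$ on $R$ preserves $w$ and induces an autoequivalence of the category of matrix factorizations with potential $w$. A positive component-preserving mutation is a rotation of the ball by $\pi$; it induces a permutation $\sigma$ of the four endpoints that respects the signs $\epsilon_i$, and $F(T')$ is naturally the image of $F(T)$ under $\sigma^*$. The plan is to establish, in the reduced category, a natural isomorphism
\[
F(T) \;\cong\; \sigma^*\iota^*\, F(T),
\]
together with a second isomorphism identifying $\iota^*F(T)$ with $F(T)$ itself after reduction; combining the two then yields $F(T) \cong \sigma^* F(T) \cong F(T')$.

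\textbf{Execution.} I would verify the isomorphism by reducing to elementary pieces. The building blocks—positive and negative crossings, caps, and cups—have small, explicit matrix factorizations involving polynomial entries of the form $\frac{p(x_i) - p(x_j)}{x_i - x_j}$ and similar. One checks that, after applying $\iota$ to all variables, these factorizations are related to the originals by an explicit sign-permutation isomorphism. The parity hypothesis is used precisely here: only when $p$ has even powers do the relevant entries transform in a way that allows the signs to cancel consistently. One then verifies that the isomorphism is natural with respect to the horizontal and vertical composition operations on tangles, so it extends from the generating set to arbitrary $2$-tangles.

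\textbf{Main obstacle.} The principal difficulty is coherence: the elementary isomorphisms must be compatible with tangle composition, with the chain maps implementing Reidemeister moves, and with the homotopies used to establish invariance of $F(T)$. Equally delicate is the setup of the reduction. Without reduction the isomorphism $F(T) \cong \sigma^*F(T)$ genuinely fails, and the $\iota$-twist corrects for this only once a specific boundary variable is specialized; component-preservation of the mutation is what guarantees that the reducing basepoint may be placed on a strand of the mutation circle meeting the boundary of the mutation ball, and positivity ensures that $\sigma$ is compatible with the sign data $\epsilon_i$ so that $\sigma^*\iota^*$ really does act on the correct matrix-factorization category.
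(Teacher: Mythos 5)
Your starting point is the right one --- because $p$ is even, the sign involution $x_i \mapsto -x_i$ preserves the potential, and the mutant's complex is obtained from the original by the signed permutation of endpoint variables (this is the paper's Lemma 4.1) --- but the proposed execution has a genuine gap. You plan to ``check the isomorphism on elementary pieces'' and then ``extend it from the generating set to arbitrary $2$-tangles'' by naturality under tangle composition. No such local, natural isomorphism exists: the complex $C$ of the tangle and its conjugate $\bar C$ (same objects, conjugated differential) are genuinely \emph{not} isomorphic in general --- if they were, unreduced homology would be mutation invariant, contradicting Wehrli's example for $\mysl2$. What is true is only that the mapping cones $\Cone(x_a+x_b\colon C\{2\}\to C)$ and $\Cone(x_a+x_b\colon \bar C\{2\}\to \bar C)$ are isomorphic, and the isomorphism is an upper-triangular map whose off-diagonal entry $\partial d$ is a \emph{global} correction term built from a derivation $\partial \phi = (\phi-\bar\phi)/(x_a+x_b)$ applied to the entire differential of the complex. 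This construction is not natural in the sense you invoke, and it only makes sense after two preparatory steps you omit: the tangle must first be put in braid form and its complex rewritten, via a nontrivial simplification algorithm, as a complex over the small category generated by $C_p(D_r)$ and $C_p(D_s)$, precisely so that the involution fixes every object (a hypothesis the cone argument cannot do without).

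Your discussion of the reduction is also too vague to close the argument. The invariant object is the cone over multiplication by $x_a+x_b$, not over a single specialized boundary variable; to identify this with the reduced complex one needs the homotopy $x_b \simeq x_a$ obtained by sliding the basepoint through the closed-up link, and it is exactly here that \emph{positive, component-preserving} mutation enters (all four endpoints lie on one component, so $x_a+x_b \simeq 2x_a$, and one divides by $2$ over $\Q$). Your worry about coherence with Reidemeister chain maps is a red herring --- the proof works with a fixed diagram --- but the three missing ingredients (normal form in a rigid subcategory, the derivation/cone lemma, and the basepoint-moving homotopy) are each essential, and without them the plan does not yield a proof.
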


Using Rasmussen's spectral sequence from HOMFLY-PT homology to $\sln$ homology,
and the fact that HOMFLY-PT homology of knots is finite dimensional, we get
the following
\begin{corollary}\label{homfly-pt}
If $K$ and $K'$ are two knots related by positive mutation, then their
HOMFLY-PT homologies are isomorphic.
\end{corollary}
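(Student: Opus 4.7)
The plan is to combine \autoref{main} with Rasmussen's spectral sequence \cite{rasmussen} from reduced HOMFLY-PT homology to reduced $\sln$ homology. For each positive integer $n$, Rasmussen constructs a differential $d_n$ on the triply graded reduced HOMFLY-PT homology of $K$ such that its homology, after collapsing one grading, is isomorphic to the reduced $\sln$ homology of $K$. So morally, the $\sln$ homologies appear as successive ``thinnings'' of a single finite-dimensional object.

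The first step is to observe that for a knot $K$ the reduced HOMFLY-PT homology is finite dimensional, while the grading shift of $d_n$ grows linearly in $n$. Thus for all sufficiently large $n$ this shift is incompatible with the bounded support of the HOMFLY-PT homology, forcing $d_n=0$. Equivalently, for all but finitely many $n$, the reduced $\sln$ homology of $K$ recovers its reduced HOMFLY-PT homology as a (re-)graded vector space, with the trigrading read off from the bigrading via the collapse map, which is injective on the finite support once $n$ is large.

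The second step is to invoke \autoref{main}: since component-preservation is automatic for knots, the theorem gives an isomorphism between the reduced $\sln$ homologies of $K$ and $K'$ for every odd $n$. Picking $n$ odd and large enough that the first step applies simultaneously to $K$ and $K'$, we conclude that the reduced HOMFLY-PT homologies of $K$ and $K'$ are isomorphic. The unreduced HOMFLY-PT homologies then agree as well, by the standard relation between reduced and unreduced theory for knots (tensoring with a fixed module).

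The main obstacle is the grading bookkeeping: one has to ensure that the bigraded isomorphism of $\sln$ homologies promotes to a trigraded isomorphism of HOMFLY-PT homologies. This is handled by the observation that for $n$ sufficiently large the linear map collapsing the $(a,q,t)$-grading to the $\sln$ bigrading is injective on the finite support of the HOMFLY-PT homology, so the trigrading can be reconstructed unambiguously from the image.
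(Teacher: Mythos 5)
Your proposal is correct and follows essentially the same route as the paper: the paper likewise invokes Rasmussen's result that for sufficiently large $n$ the $\sln$ homology of a knot is a regraded version of its (finite-dimensional) HOMFLY-PT homology, applies \autoref{main} for a large odd $n$, and notes that the triple grading is recoverable for $n$ large enough. Your version just unpacks the vanishing of the differentials $d_n$ and the injectivity of the grading-collapse map, which is exactly the content behind the citation.
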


We prove the theorem by first showing that the \KR complex of the inner
$2$-tangle can be built out of the complexes assigned to two basic diagrams: a
pair of arcs and a singular crossing.  For the \KR complex, we follow
Rasmussen's definitions from \cite{rasmussen}, since Khovanov and Rozansky's
original definitions are not general enough to serve our purpose.  We then
derive a criterion for a certain mapping cone of this complex to be invariant
under reflection, which turns out to be the case for odd $n$ in the case of
positive mutation.  Closing up the tangle, we see that the mapping cone
computes reduced \KR homology.

The methods used in the proof are fairly general, and we envision that
they can be used to show mutation invariance in other contexts.
\begin{itemize}
\item They apply to (unreduced) $\Z_2$ Khovanov homology and more generally to
the equivariant theory over the ring $\Z_2[h,t]$ defined by Khovanov in
\cite{khovanov-frobenius}. In both cases we get mutation invariance under
arbitrary component-preserving mutation, regardless of whether mutation is
positive or negative.
\item They also apply to Khovanov's (integral) $\mysl 3$ homology and Mackaay
and Vaz's corresponding equivariant theory over the ring
$\Z[a,b,c]$~\cite{mackaay-vaz}.  We plan to show in a later paper that the
corresponding theory over $\Z_2[a,b,c]$ is invariant under component-preserving
mutation and that an appropriately defined reduced theory over $\Z[b]$ is
invariant under positive component-preserving mutation.
\item We expect that analogs of the previous statements hold for larger $n$:
For odd $n$, a reduced version of Krasner's equivariant $\sln$ homology~\cite{krasner-equivariant}
should be invariant under positive component-preserving mutation when setting
the variables corresponding to coefficients of odd powers of $p$ to $0$.
For arbitrary $n$, we expect invariance under component-preserving mutation
when working with coefficients in $\Z_2$.  Note, however, that the standard
definition of $\sln$ homology only works with coefficients in $\Q$.
Krasner~\cite{krasner-integral} proposed a definition of an integral version of $\sln$ homology.
\item Equivariant versions of $\sln$ homology give rise to spectral sequences
that can be used to define analogs of Rasmussen's (integer-valued)
$s$-invariant~\cite{rasmussen-s} for $\sln$ homology.  Generalized
$s$-invariants have their source in Gornik's work~\cite{gornik} and have been
studied by Lobb~\cite{lobb} and Wu~\cite{wu-s}.  One can define such an
invariant for any polynomial over $\mathbb C$ of degree $n$ with only single
roots, but it is not known whether the invariant depends on this choice of
polynomial.  We expect that for odd $n$, they are invariant under positive
component-preserving mutation --- at least for a particular choice of
polynomial.  Furthermore, we hope to show invariance of the original
$s$-invariant by considering the equivariant $\mysl 2$ theory over $\Z_2$.
\item We also expect that Khovanov and Rozansky's HOMFLY-PT
homology~\cite{khr2} is invariant under arbitrary mutation.
A different set of technical difficulties arises when studying this question;
we hope to return to the question in a future paper.
\item More generally, we expect Rasmussen's spectral sequence from HOMFLY-PT
homology to $\sln$ homology to be invariant for odd $n$, although it is less
clear how to apply our technique since the data the spectral sequence is
constructed from lacks a satisfactory equivalent for tangles.
\item It will be interesting to see if a generalization of our method can be
applied to show invariance of $\Z_2$ Khovanov homology under genus-$2$ mutation~\cite{genus-2}.
\end{itemize}

Finally, we note that our result is consistent with calculations for the
Kinoshita-Terasaka knot and the Conway knot carried out by
Mackaay and Vaz~\cite{mackaay-vaz-calculation}.

\bigskip

The organization of this paper is as follows.
In \autoref{section-definitions}, we review relevant definitions and explain
our conventions.
In \autoref{section-topology}, we reduce the problem to the case of mutation of
a $2$-tangle in what we call braid form.
In \autoref{section-reflection}, we investigate how the \KR complex behaves
under positive mutation.
In \autoref{section-2-tangles}, we show how to represent the \KR complex of a $2$-tangle
in braid-form as a complex over a particularly simple category. 
In \autoref{section-mutation}, we derive a general criterion for when a chain
complex over an additive category is isomorphic to its image under a certain
involution functor and show how this criterion applies to the problem at hand.
In \autoref{section-proof}, we combine the results from the previous sections to
prove \autoref{main}.

\medskip \textbf {Acknowledgments.}
The author would like to thank Matt Hedden for many helpful discussions and
Mikhail Khovanov for his valuable comments on an earlier version of this paper.
He would like to thank his adviser Effie Kalfagianni for her expertise,
support and encouragement. This
research was in part supported by NSF Grants
\href{http://www.nsf.gov/awardsearch/showAward.do?AwardNumber=0456155}{DMS-0456155} and
\href{http://www.nsf.gov/awardsearch/showAward.do?AwardNumber=0805942}{DMS-0805942}.

\section{Definitions}\label{section-definitions}

\begin{figure}[t]
\centering
\begin{tikzpicture}[thick]
\draw[semithick,->] (-1.2,0)--(1.2,0) node[below] {$x$};
\draw[semithick,->] (0,-1.2)--(0,1.2) node[right] {$y$};
\draw (0,0) circle (1cm);
\draw (  45:0.9)--(  45:1.1) node[ur] {$d$};
\draw ( 135:0.9)--( 135:1.1) node[ul]  {$c$};
\draw ( -45:0.9)--( -45:1.1) node[lr] {$b$};
\draw (-135:0.9)--(-135:1.1) node[ll]  {$a$};
\end{tikzpicture}
\caption{Placement and labeling of the endpoints of the inner tangle}
\label{figure-endpoints}
\end{figure}
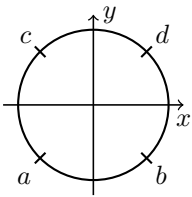

Conway mutation is the process of decomposing a link $L$ as the union of two
2-tangles $L = T \cup T'$ and then regluing in a certain way.  Diagrammatically,
we may assume that one of the tangles (the `inner' tangle $T$) lies inside a
unit circle with endpoints equally spaced as in \autoref{figure-endpoints}.
Mutation consists of one of the following transformations $R$ of the inside
tangle, followed by regluing: reflection along the x-axis ($R_x$), reflection
along the y-axis ($R_y$) or rotation about the origin by 180 degrees ($R_z$).
In other words, the mutant is given by $L' = R(T) \cup T'$.  When taking orientations
into account, we can distinguish two types of mutation (see for example 
Kirk and Livingston~\cite{kirk-livingston}).

\begin{definition}
Mutation of an oriented link is called \emph{positive} if orientations
match when regluing, i.e.\ if $L' = R(T) \cup T'$ as an oriented link
and it is called \emph{negative} if the orientation of the
inner tangle needs to be reversed before regluing, i.e.\ if
$L' = -R(T) \cup T'$ as an oriented link, where $-R(T)$
denotes $R(T)$ with orientations reversed.
\end{definition}
As an example, consider the two knots in \autoref{figure-mutantpair}.
$11^n_{34}$ is a positive mutant of $11^n_{42}$, since rotation
about the $y$-axis preserves the orientations of the ends of $T$.
It is also a negative mutant, as can be seen by considering 
rotation about the $z$-axis.

There are $16$ mutant pairs with $11$ or fewer crossings, see \cite{links-gould}.
It can be checked that all of them can be realized by negative mutation.
Among the $16$ pairs, we found $5$ that can be realized on the tangle $T$ depicted in
\autoref{figure-mutantpair}(a):
$(11^a_{ 57}, 11^a_{231})$,
$(11^a_{251}, 11^a_{253})$,
$(11^n_{ 34}, 11^n_{ 42})$,
$(11^n_{ 76}, 11^n_{ 78})$ and
$(11^n_{151}, 11^n_{152})$.
$R_y(T)$ is isotopic to $T$ but with orientations reversed, therefore these
$5$ mutant pairs can be realized by both positive and negative
mutation.  In particular, our proof applies to
the famous Kinoshita-Terasaka - Conway pair, illustrated in \autoref{figure-mutantpair}(b) and (c).

\newcommand\ktc[2]{
\begin{tikzpicture}[thick,baseline=-0.1cm]
\if 1#2
\path[use as bounding box] (-1.2,-1.8) rectangle (3.2,1.8);
\fi

\if 1#2
\coordinate (x) at (1.8,1.5);
\coordinate (y) at (1.8,-1.5);
\coordinate (a) at (1.5,0.8);
\coordinate (b) at (3,-0.5);
\coordinate (c) at (1.3,0);
\coordinate (d) at (1.8,1.2);
\coordinate (e) at (2.3,0);
\coordinate (f) at (2,-0.6);

\draw (f) to[out=-150,in=-45] (-45:1.1);
\draw[upper,looseness=1.5] (b) to[out=-90,in=-90] (c);
\draw[upper] (e) to[out=-90,in=0] (y);
\draw[upper,looseness=1.8] (x) to[out=0,in=30] (f);
\draw[upper] (a) to[out=-20,in=90] (b);
\draw[upper] (d) to[out=0,in=90] (e);

\draw (c) to[out=90,in=180] (d);
\draw[upper] (45:1.1) to[out=45,in=160] (a);

\draw (135:1.1) to[out=135,in=180] (x);
\draw (-135:1.1) to[out=-135,in=180] (y);
\fi

\draw (0,0) circle (1.1cm);

\if 0#2
\node[ur] at (  45:1.1) {$d$};
\node[ul]  at ( 135:1.1) {$c$};
\node[lr] at ( -45:1.1) {$b$};
\node[ll]  at (-135:1.1) {$a$};
\fi

\if 0#1
\begin{scope}[xscale=-1]
\else
\begin{scope}[yscale=-1]
\fi
\coordinate (A) at (-0.43,  0.45);
\coordinate (B) at (    0,  0.5);
\coordinate (C) at ( 0.43,  0.45);
\coordinate (D) at (-0.31, -0.39);
\coordinate (E) at ( 0.31, -0.39);
\draw \if 1#2 \if 1#1 [<-] \fi \fi (135: 1.1) to[out=-45,in=-135,looseness=1.2] (0, 0.5);
\draw (0, 0.5) to[out= 45,in=   0,looseness=2]   (0,-0.5);
\fill[fill=white] (B) circle (2pt);
\fill[fill=white] (E) circle (2pt);
\draw \if 1#2 \if 0#1 [->] \else [<-] \fi \fi (-135:1.1) to[out= 45,in= 135,looseness=1.5] (-45:1.1);
\fill[fill=white] (D) circle (2pt);
\draw (0,-0.5) to[out=180,in= 135,looseness=2]   (0, 0.5);
\draw \if 1#2 \if 0#1 [->] \fi \fi (0, 0.5) to[out=-45,in=-135,looseness=1.2] (45:1.1);

\begin{scope}
\fill[fill=white] (A) circle (2pt);
\clip (A) circle (2pt);
\draw (135: 1.1) to[out=-45,in=-135,looseness=1.2] (0, 0.5);
\end{scope}

\begin{scope}
\fill[fill=white] (C) circle (2pt);
\clip (C) circle (2pt);
\draw (0, 0.5) to[out= 45,in=   0,looseness=2]   (0,-0.5);
\end{scope}
\end{scope}
\end{tikzpicture}
}

\begin{figure}
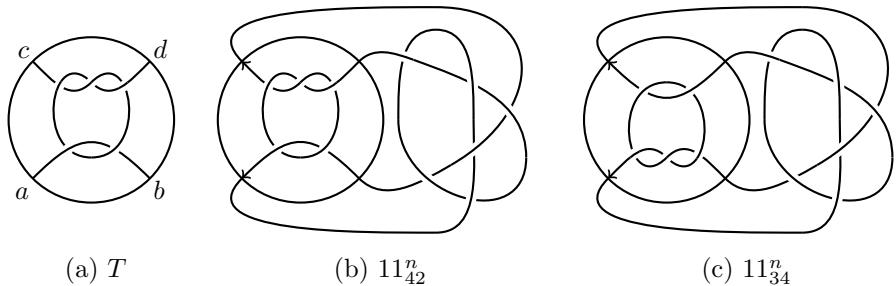

\centering
\begin{tabular}{ccc}
\ktc00 & \ktc01 & \ktc11 \\
(a) $T$ & (b) $11^n_{42}$ & (c) $11^n_{34}$
\end{tabular}
\caption{A 2-tangle with orientation-reversing symmetry and the Kinoshita-Terasaka - Conway mutant pair}
\label{figure-mutantpair}
\end{figure}

\begin{definition}
Mutation of a link is called \emph{component-preserving} if $a$ and
$R(a)$ lie on the same component of the original link (or equivalently,
on the same component of the mutant).
\end{definition}

Note that knot mutation is always component-preserving.  If positive mutation
is component-preserving, then $a$ and $R(a)$ are either both incoming or both
outgoing edges, hence all $4$ endpoints lie on the same component of the link.
We referred to this component earlier as the \emph{component of the mutation}.

For \KR homology, our definitions closely follow \cite{rasmussen}, but note
that we work with $\Z_2$-graded matrix factorizations instead of $\Z$-graded
ones in order to get a stronger version of invariance under
Reidemeister moves.

A matrix factorization over a commutative ring $R$ with potential $w \in
R$ is a free $\Z_2$-graded module $C^*$ equipped with a differential $d = (d_0, d_1)$ such that
$d^2 = w \cdot I_C$.  Following \cite{rasmussen}, we use the notation

\[ \mf{C^1}{C^0}{d^1}{d^0} \]

Morphisms are simply degree-0 maps between matrix factorizations which commute
with the differential.  We denote the category of matrix factorizations over R
with potential $w$ by $MF_w(R)$.  We say that two morphisms of matrix
factorizations $\phi, \psi: C \to C'$ are homotopic if $\phi - \psi = d_{C'}h +
hd_C$ for some degree-1 homotopy $h: C \to C'$.  The category of matrix
factorizations over $R$ with potential $w$ and morphisms considered up to
homotopy will be denoted by $HMF_w(R)$. 
For a graded ring $R$, whose grading we will call \emph{$q$-grading}, we also
introduce a notion of graded matrix factorizations with homogeneous potential $w$
by requiring that both $d^0$ and $d^1$ be homogeneous of $q$-degree $\frac12\deg w$.
Morphisms between graded matrix factorization are required to have $q$-degree $0$, whereas
homotopies must have $q$-degree $-\frac12\deg w$.
The corresponding homotopy category of graded matrix factorizations will be
denoted by $\hmf_w(R)$.  For the three different gradings in $\hmf_w(R)$ we
introduce three types of grading shifts:  A shift in the $\Z_2$ grading coming
from matrix factorizations will be denoted by $\langle\cdot\rangle$, a shift in homological
grading by $[\cdot]$ and a shift in $q$-grading by $\{\cdot\}$.  We follow the convention
that $R[n]$ has a single generator in homological height $n$, and similarly for $\{\cdot\}$.
Note that if $\phi: A \to B$ has $q$-degree $d$, then the $q$-degree of $\phi: A\{k_A\} \to B\{k_B\}$
is $d + k_B - k_A$.

An important class of matrix factorizations is the class of Koszul factorizations, which
we will briefly describe here.  For a more detailed treatment, we refer the reader to
Section 2.2 of \cite{khr3} (but note that we switched the order of the arguments
of $K$ in order to be consistent with \cite{khr2} and \cite{rasmussen}).
If $u,v \in R$, then $K(u;v)$ is the factorization
\[ \mf{R\{\frac{\deg v - \deg u}2\}}{R}{v}{u} \]
We will sometimes write $K_R(u;v)$ to clarify which ring we are working over.
For $\mathbf u = (u_1, ..., u_n)^T$, $\mathbf v = (v_1, ..., v_n)^T$ we define
$K(\mathbf u, \mathbf v) = \bigotimes_{k=1}^n K(u_k; v_k)$.  This
is a matrix factorization with potential $\sum_{k=1}^n a_kb_k$.  We will also use the notation
\[K(\mathbf u, \mathbf v) = \koszul{u_1 & v_1 \\ \vdots & \vdots \\ u_n & v_n}\]
If we are not interested in $\mathbf u$, we may apply arbitrary row transformations
to $\mathbf v$: for an invertible matrix $X$,
$K(\mathbf u, \mathbf v) \cong K\left((X\inv)^t \mathbf u, X \mathbf v\right)$.
We describe order-two Koszul matrix factorizations explicitly, thereby fixing
a sign convention for the tensor product of matrix factorizations:
\[
\koszul{u_1 & v_1 \\ u_2 & v_2} = 
\xymatrix{
R\{k_1\} \oplus R\{k_2\} \ar@<0.4ex>[rr]^-{\mmatrix{u_2}{u_1}{v_1}{-v_2}} &&
R\{k_1+k_2\} \oplus R\ar@<0.4ex>[ll]^-{\mmatrix{v_2}{u_1}{v_1}{-u_2}}
}
\]
Here $k_1 = \frac{\deg v_1 - \deg u_1}2 = \deg v_1 - \frac{\deg w}2$
and  $k_2 = \frac{\deg v_2 - \deg u_2}2 = \deg v_2 - \frac{\deg w}2$.

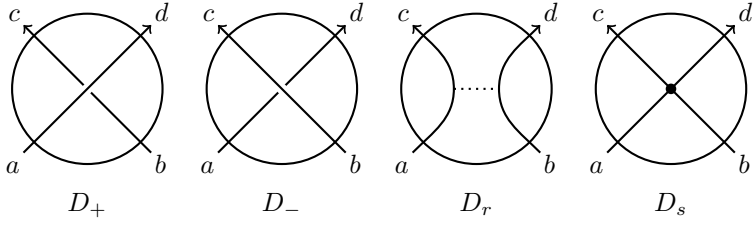
\begin{figure}[t]
\centering
\begin{tabular}{cccc}
\begin{tikzpicture}[thick]
\draw (0,0) circle (1cm);
\draw[->] (135:-1.2) -- (135:1.2);
\fill[fill=white] (0,0) circle (2pt);
\draw[->] ( 45:-1.2) -- ( 45:1.2);
\node[ur] at (  45:1.2) {$d$};
\node[ul] at ( 135:1.2) {$c$};
\node[lr] at ( -45:1.2) {$b$};
\node[ll] at (-135:1.2) {$a$};
\end{tikzpicture}
&
\begin{tikzpicture}[thick]
\draw (0,0) circle (1cm);
\draw[->] ( 45:-1.2) -- ( 45:1.2);
\fill[fill=white] (0,0) circle (2pt);
\draw[->] (135:-1.2) -- (135:1.2);
\node[ur] at (  45:1.2) {$d$};
\node[ul] at ( 135:1.2) {$c$};
\node[lr] at ( -45:1.2) {$b$};
\node[ll] at (-135:1.2) {$a$};
\end{tikzpicture}
&
\begin{tikzpicture}[thick]
\draw (0,0) circle (1cm);
\draw[->] ( 45:-1.2) to[out= 45,in=-90] (-0.3,0) to[out=90,in= -45] (135:1.2);
\draw[->] (135:-1.2) to[out=135,in=-90] ( 0.3,0) to[out=90,in=-135] ( 45:1.2);
\draw[dotted] (-0.3,0)--(0.3,0);
\node[ur] at (  45:1.2) {$d$};
\node[ul] at ( 135:1.2) {$c$};
\node[lr] at ( -45:1.2) {$b$};
\node[ll] at (-135:1.2) {$a$};
\end{tikzpicture}
&
\begin{tikzpicture}[thick]
\draw (0,0) circle (1cm);
\draw[->] ( 45:-1.2)--( 45:1.2);
\draw[->] (135:-1.2)--(135:1.2);
\fill (0,0) circle (2pt);
\node[ur] at (  45:1.2) {$d$};
\node[ul] at ( 135:1.2) {$c$};
\node[lr] at ( -45:1.2) {$b$};
\node[ll] at (-135:1.2) {$a$};
\end{tikzpicture}
\\
$D_+$ & $D_-$ & $D_r$ & $D_s$
\end{tabular}
\caption{Positive crossing, negative crossing, oriented smoothing and singular
crossing.  The dotted line connecting the two arcs of the oriented smoothing
illustrates that we consider both arcs to be on the same component of the
smoothing.}
\label{figure-smoothings}
\end{figure}

Our definition of \KR homology closely follows Rasmussen~\cite{rasmussen}, whose
definitions we amend slightly for technical reasons.  We also restrict
ourselves to connected diagrams.  To any diagram of a (possibly singular)
oriented tangle, which we allow to contain any of the diagrams depicted in
\autoref{figure-smoothings} as
subdiagrams, Rasmussen defines two rings, which depend only on the underlying
$4$-valent graph obtained by replacing all of those diagrams by a vertex.
The \emph{edge ring} $R(D)$ is the polynomial ring over $\Q$ generated
by variables $x_e$, where $e$ runs over all edges of the diagram, subject to a
relation of the form $x_a+x_b-x_c-x_d$ for each vertex of the underlying $4$-valent
graph.  By setting $\deg x_e = 2$ for each edge $e$ of $D$, $R(D)$ becomes a graded
ring. The \emph{external ring} $R_e(D)$ is the subring of $R(D)$ generated
by the variables associated to the endpoints of $D$. Lemma 2.5 in \cite{rasmussen}
shows that if we associate the variables $x_i$ ($i \in \{1,2,\dotsc, k\}$) to the
incoming edges of $D$ and $y_i$ to the outgoing edges, then
$R_e(D) \cong \Q[x_1, ..., x_n, y_1, ..., y_n]\slash\left(\sum_i y_i - \sum_i x_i\right)$.

Fix a polynomial $p \in \Q[x]$. If $p$ is not homogeneous, we will disregard
$q$-gradings below.  To each tangle diagram $D$, we will associate a complex
$C_p(D)$ of matrix factorizations over $R(D)$, which we consider to be an object of
the category $\K(\hmf_w(R_e(D)))$, where $\K(\CC)$ denotes the homotopy
category of bounded complexes over the additive category $\CC$
and $w = \sum_i p(y_i) - \sum_i p(x_i)$ where $x_i$ and $y_i$ are associated to the
incoming and outgoing edges as above.
$C_p(D)$ is first defined on the diagrams shown in \autoref{figure-smoothings}.
In each case $R \ce R(D) = R_e(D) = \Q[x_a,x_b,x_c,x_d]\slash(x_a+x_b=x_c+x_d)$. We
set
\begin{align*}
C_p(D_r) &= K(x_c-x_a;*)\mfshift = K(*; x_c-x_a)\{n-1\}\text{,} \\
C_p(D_s) &= K(*; x_cx_d-x_ax_b)\{-1\}\text{,} \\
C_p(D_+) &= K(*; x_cx_d-x_ax_b)[-1] \xrightarrow{d_+} K(*; x_c-x_a)\text{ and} \\
C_p(D_-) &= K(*; x_c-x_a) \xrightarrow{d_-} K(*; x_cx_d-x_ax_b)\{-2\}[1]\text{.}
\end{align*}
Here $*$ is of course determined by the potential in each case; we postpone
the definitions of $d_+$ and $d_-$ until we need them in \autoref{reflection}.

This definition is extended to arbitrary tangle diagrams by the formula
\begin{equation}\label{tensor}
C_p(D) = \bigotimes_i C_p(D_i) \otimes_{R(D_i)} R(D) \text{,}
\end{equation}
where $D_i$ runs over all crossings of $D$ and
the big tensor product is taken over $R(D)$. As indicated above, we usually view
$C_p(D)$ as a matrix factorization over the smaller ring $R_e(D)$.

Rasmussen shows (Lemma 2.8 in \cite{rasmussen})
\begin{proposition}\label{gluing}
If $D$ is obtained from $D_1$ and $D_2$ by taking their disjoint union
and identifying external edges labeled $x_1, \dotsc, x_k$ in both diagrams,
then
\[R(D) \cong R(D_1) \otimes_{\Q[x_1, \dotsc, x_k]} R(D_2) \text{ and}\]
\[C_p(D) \cong C_p(D_1) \otimes_{\Q[x_1, \dotsc, x_k]} C_p(D_2) \text{.}\]
\end{proposition}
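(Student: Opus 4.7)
The plan is to prove the two isomorphisms in sequence, the first being a comparison of presentations and the second a formal consequence of the tensor-product definition \eqref{tensor}.

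For the ring isomorphism, I would observe that the underlying $4$-valent graph of $D$ has vertex set equal to the disjoint union of the vertex sets of $D_1$ and $D_2$, and edge set equal to the disjoint union of the edge sets modulo the identifications $x_1, \dotsc, x_k$. Consequently, $R(D)$ is generated over $\Q$ by the disjoint union of the edge variables of $D_1$ and $D_2$, subject to the identifications and to the union of the vertex relations coming from $D_1$ and $D_2$. This is precisely the presentation of the pushout $R(D_1) \otimes_{\Q[x_1, \dotsc, x_k]} R(D_2)$.

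For the complex-level isomorphism, I would use that the crossings of $D$ partition as the disjoint union of those of $D_1$ and those of $D_2$, so \eqref{tensor} factors as
\[
C_p(D) \;\cong\; \Bigl(\bigotimes_{i \in D_1} C_p(D_i) \otimes_{R(D_i)} R(D)\Bigr) \otimes_{R(D)} \Bigl(\bigotimes_{j \in D_2} C_p(D_j) \otimes_{R(D_j)} R(D)\Bigr).
\]
For any crossing $D_i$ of $D_1$ we have $R(D_i) \subset R(D_1) \subset R(D)$, so $C_p(D_i) \otimes_{R(D_i)} R(D) \cong \bigl(C_p(D_i) \otimes_{R(D_i)} R(D_1)\bigr) \otimes_{R(D_1)} R(D)$; grouping these factors recovers $C_p(D_1) \otimes_{R(D_1)} R(D)$, and the analogous grouping for $D_2$ yields
\[
C_p(D) \;\cong\; C_p(D_1) \otimes_{R(D_1)} R(D) \otimes_{R(D_2)} C_p(D_2).
\]
Substituting $R(D) \cong R(D_1) \otimes_{\Q[x_1, \dotsc, x_k]} R(D_2)$ from the first part collapses this to $C_p(D_1) \otimes_{\Q[x_1, \dotsc, x_k]} C_p(D_2)$.

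The argument is formally clean, so I expect the main obstacle to be bookkeeping rather than anything substantive: one must track the $\Z_2$-grading, $q$-grading, homological grading, and differential structure across each base change. The point to verify most carefully is that the potentials on the two sides agree — each glued edge $x_j$ is outgoing from exactly one of $D_1, D_2$ and incoming to the other, so the $p(x_j)$ contributions to the potentials of $C_p(D_1)$ and $C_p(D_2)$ cancel after the base change over $\Q[x_1, \dotsc, x_k]$, leaving precisely the potential of $C_p(D)$ coming from the external edges of $D$.
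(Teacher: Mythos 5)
Your argument is correct and is essentially the standard one: the paper itself gives no proof of this Proposition, simply citing it as Lemma~2.8 of Rasmussen's paper, where the proof proceeds exactly as you describe (compare presentations of the edge rings, then regroup the tensor factors in the defining formula~\eqref{tensor} and use associativity of base change). Your closing remark about the cancellation of the $p(x_j)$ contributions to the potentials is the right point to flag, and the inclusion $R(D_1)\subset R(D)$ you invoke is not actually needed --- the regrouping only uses that the maps $R(D_i)\to R(D)$ factor through $R(D_1)$, so no injectivity claim has to be justified.
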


To define reduced \KR homology of a link with respect to a marked component, we
pick an edge on the marked component, which we label by $x$.  We view $C_p(D)$
as a matrix factorization over $\Q[x]$, i.e.\ as an object of $\K(\hmf_0(\Q[x]))$.
Alternatively, we may consider the diagram $D^\circ$ obtained from $D$ by
cutting it open at the marked edge.  Let $x$ and $y$ be the labels of the
incoming and outgoing edge of $D^\circ$, respectively.  Then $C_p(D^\circ)$ is
a complex of matrix factorization with potential $p(y)-p(x) = 0$ over the ring
$R_e(D^\circ) = \Q[x,y]\slash(y-x) \cong \Q[x]$ and
$C_p(D) \cong C_p(D^\circ)$ as objects of $\K(\hmf_0(\Q[x]))$.

We define reduced \KR homology in two steps.  We first define
the unreduced complex $\widetilde C_p(D^\circ)$ by tensoring with $K(p'(x);0)$.
Then we define the reduced complex $\widehat C_p(D^\circ)$ as the mapping cone
$\Cone(x: C_p(\widetilde D^\circ)\{2\} \to C_p(\widetilde D^\circ))$ (we use 
$\widehat \cdot$ rather than $\bar \cdot$ in order to avoid confusion with
the involution $\bar \cdot$ to be defined later).

We now consider $\widehat C_p(D^\circ)$ as an object of $\K(\hmf_0(\Q))$. Since
we are working over a field
and matrix factorizations with potential $0$ are simply $\Z_2$-graded chain
complexes, the category $\hmf_0(\Q)$ is equivalent to the category of
$\Z_2 \oplus \Z$-graded $\Q$-vector spaces by \autoref{homotopy-2} below.
Hence the category $\K(\hmf_0(\Q)$ is equivalent to the category
$\K(\Z_2 \oplus \Z\text{-graded $\Q$-vector spaces})$, which in turn is
equivalent to the category of $\Z_2 \oplus \Z \oplus \Z$-graded
$\Q$-vector spaces (bounded with respect to the second $\Z$ summand) by \autoref{homotopy-0}.
Reduced \KR homology is the image of $\widehat C_p(D^\circ)\{(n-1)w\}$ under this
equivalence of categories, where $w$ is the writhe of $D$.

\begin{proposition}\label{equivalence}
The definition of reduced $\sln$ homology above is equivalent to Khovanov and
Rozansky's original definition in \cite{khr1}.
\end{proposition}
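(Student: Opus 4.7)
The plan is to prove the equivalence in three main stages, using the fact that Rasmussen~\cite{rasmussen} has already established the corresponding equivalence for his variant of the construction. Throughout, I would specialize to $p(x) = \frac1{n+1}x^{n+1}$, so that $p'(x) = x^n$, since this is the setting in which Khovanov-Rozansky originally defined $\sln$ homology.

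First, I would identify $\widetilde C_p(D^\circ) = C_p(D^\circ)\otimes K(x^n;0)$ with the matrix factorization that \KR associate to the closed diagram $D$. The closure of $D^\circ$ at the marked edge corresponds, via \autoref{gluing}, to a tensor product over $\Q[x]$; the extra factor $K(x^n;0)$ provides exactly the data needed to turn the open-diagram complex into the closed-diagram matrix factorization of \cite{khr1}. This step is essentially bookkeeping and reduces to Rasmussen's comparison between his closed-diagram construction and that of \cite{khr1}.

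Second, I would match the mapping cone $\widehat C_p(D^\circ) = \Cone(x\colon \widetilde C_p(D^\circ)\{2\} \to \widetilde C_p(D^\circ))$ with the reduction procedure of \cite{khr1}. \KR's reduced theory is obtained by quotienting the closed-diagram matrix factorization by the action of the marked-edge variable $x$, i.e.\ by tensoring with $\Q[x]/(x)$ over $\Q[x]$. Since the two-term Koszul complex $\Q[x] \xrightarrow{x} \Q[x]$ is a free resolution of $\Q[x]/(x)$, the derived tensor product is computed precisely by the above mapping cone, with the $\{2\}$ shift on the source supplying the correct $q$-grading for multiplication by $x$. Finally, I would translate the resulting object back to triply-graded vector spaces via \autoref{homotopy-2} and \autoref{homotopy-0}, and verify that the normalizing shift $\{(n-1)w\}$ reproduces \KR's writhe correction by evaluating it on a single positive and a single negative crossing using the explicit formulas for $C_p(D_\pm)$ already recorded in this section.

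The main obstacle I anticipate is the careful tracking of all three gradings through the chain of identifications, particularly the interplay between the $\Z_2$-shift $\mfshift$ coming from the matrix-factorization grading, the homological shift $[\cdot]$, and the $q$-grading shift $\{\cdot\}$, and checking that our restriction to $\Z_2$-graded (as opposed to $\Z$-graded) matrix factorizations does not cause a discrepancy with either Rasmussen's or \KR's original normalizations. Modulo this grading accounting, the equivalence follows by combining the closed/open-diagram identification of \cite{rasmussen} with the standard derived-category interpretation of the cone on multiplication by $x$ as quotienting by $x$.
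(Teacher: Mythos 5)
Your overall strategy---leaning on Rasmussen's comparison with \cite{khr1} and interpreting the mapping cone as ``quotienting by $x$''---is the same as the paper's, but the key step of your second stage has a genuine gap. The cone on $x$ computes the \emph{derived} tensor product with $\Q[x]/(x)$, and this agrees with the honest quotient only when the relevant $\mathrm{Tor}_1$, i.e.\ the kernel of multiplication by $x$, vanishes. Moreover, what Rasmussen's Lemma 3.11 actually identifies with the definition of \cite{khr1} is $H^*(H/(x))$, where $H$ is the homology of $C_p(D)$ with respect to the matrix factorization differential $d_{tot}$, equipped with the residual differential $d_v^*$: the quotient is taken on $H$, \emph{after} the first homology has been computed. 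Since both definitions are iterated homologies $H_{d_v}\circ H_{d_{tot}}$ rather than homologies of a single total complex, you cannot simply invoke quasi-isomorphism invariance of $-\otimes^{L}_{\Q[x]}\Q[x]/(x)$; you must compare $H^*(\Cone(x\colon H\to H))$ with $H^*(H/(x))$, and the discrepancy between these is exactly the $\ker(x\colon H\to H)$ term that the phrase ``the Koszul complex is a free resolution'' sweeps under the rug.

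The paper closes this gap with the spectral sequence of the two-column double complex $\Cone(x\colon H\to H)$: taking homology in the $x$-direction first gives $E^1=H/(x)$ (this is where injectivity of $x$ on $H$ enters), hence $E^2=H^*(H/(x))$, and the sequence degenerates at $E^2$ because there are only two columns. If you wish to keep the derived-tensor-product language, you still must supply the vanishing of $\ker(x\colon H\to H)$, i.e.\ torsion-freeness of $H$ over $\Q[x]$, as an input; without it the two definitions could differ by a $\mathrm{Tor}_1$ summand. Your first stage is also packaged a little differently from what Rasmussen provides---his Lemma 3.11 compares iterated homologies directly rather than matrix factorizations of closed diagrams---but that is presentational. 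The grading bookkeeping you flag as the main obstacle is worth checking, but it is not where the real difficulty lies.
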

\begin{proof}
Rasmussen did most of the work for us in his proof of Proposition 3.12 in
\cite{rasmussen}.  Using the notation $d_{tot}$ for the (inner) matrix
factorization differential and $d_v$ for the (outer) differential of a complex
of matrix factorizations, he shows in Lemma 3.11 that the original definition
in \cite{khr1} is equivalent to $H^*(H/(x))$, where $H$ is the chain complex
whose underlying $\Q$-vector space is $H^*(C_p(D); d_{tot})$ and whose
differential is $d_v^*$.  Our definition above is equivalent to
$H^*(\Cone(x: H \to H)))$. Since $\Cone(x: H \to H)$ has a natural double complex
structure, it induces a spectral sequence that converges to
$H^*(\Cone(x: H \to H)))$. The $E^1$ page of the spectral sequence is $H/(x)$,
so the $E^2$ page is $H^*(H/(x))$.  The fact that the spectral sequence
collapses at the $E^2$ page implies that the two definitions are equivalent.
It is easy to see that gradings match as well, as Rasmussen explains at the
end of his proof of Proposition 3.12.
\end{proof}
\begin{remark}
We could have more straightforwardly defined reduced homology as the
homology of $\Cone(x: C_p(D) \to C_p(D))$.  However, this would have
required us to show that $C_p(D)$ is torsion-free as a $\Q[x]$-module
in order to establish equivalence of definitions, which follows from
\autoref{reduce} below only if $D$ is a braid diagram.
\end{remark}

The following two propositions are well-known in the finitely generated case.
We verify that proof carries over to the infinitely generated setting.
\begin{proposition}\label{homotopy-0}
Let
$C = \dots \xrightarrow{d^{k-1}} C^k \xrightarrow{d^k} C^{k+1} \xrightarrow{d^{k+1}} \dots $
be a chain complex over $\Q$ (with not necessarily finitely generated
chain groups).  Then $C$ is homotopy equivalent to a complex with zero
differential (its cohomology).
\end{proposition}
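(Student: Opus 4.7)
The plan is to carry out the classical deformation retract construction, verifying that the only place finite-dimensionality enters the usual argument is the ability to split a subspace off as a direct summand. Over a field this splitting is available in full generality (using the existence of bases, i.e.\ a Zorn's lemma argument), so the argument goes through unchanged for arbitrary chain groups.

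In detail, I would begin by letting $Z^k = \ker d^k$ and $B^k = \im d^{k-1}$. Since $\Q$ is a field, I can choose a complement $W^k$ of $Z^k$ in $C^k$, and a complement $H^k$ of $B^k$ in $Z^k$, giving a direct sum decomposition
\[C^k = B^k \oplus H^k \oplus W^k\text{,}\]
with $H^k$ canonically isomorphic to the $k$-th cohomology of $C$. The differential $d^k$ vanishes on $B^k \oplus H^k$ and restricts to an isomorphism $d^k|_{W^k}\colon W^k \xrightarrow{\cong} B^{k+1}$.

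Next, I would define maps in both directions: let $\iota\colon H^*(C) \to C$ be the inclusion $H^k \hookrightarrow C^k$ of the chosen complements, viewed as a map of complexes (trivially a chain map since $d^k$ vanishes on $H^k$), and let $\pi\colon C \to H^*(C)$ be the projection along $B^k \oplus W^k$. Both are chain maps, and $\pi \iota = \id_{H^*(C)}$ on the nose. To show $\iota\pi$ is homotopic to $\id_C$, I would define $h\colon C^{k+1} \to C^k$ to be the inverse of $d^k|_{W^k}$ on the $B^{k+1}$ summand and zero on $H^{k+1} \oplus W^{k+1}$. A direct computation on each summand shows
\[d^{k-1}h + hd^k = \id_{C^k} - \iota\pi\text{,}\]
which is the required homotopy.

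There is no real obstacle here; the only subtlety, compared with the finitely generated case, is the use of Zorn's lemma to produce the splittings $C^k = Z^k \oplus W^k$ and $Z^k = B^k \oplus H^k$. Once those complements exist, all the formulas and identifications are identical to the finite-dimensional version, and in particular the resulting splitting $C \cong H^*(C) \oplus (\text{acyclic})$ gives the desired homotopy equivalence.
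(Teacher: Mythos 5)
Your proof is correct and follows essentially the same route as the paper: both split $C^k$ as $B^k \oplus H^k \oplus W^k$ using that subspaces of $\Q$-vector spaces are always direct summands, and both conclude by observing that the non-cohomology part is contractible. The only cosmetic difference is that you write out the contracting homotopy $h$ explicitly where the paper just cites contractibility of the two-term identity complexes.
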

\begin{proof}
As usual, let $Z^k \ce \ker(d^k)$ and $B^{k+1} \ce \im(d^k)$.  Since vector
spaces are free as modules, the short exact sequences
$0 \rightarrow Z^k \rightarrow C^k \xrightarrow{d^k} B^{k+1} \rightarrow 0$
and
$0 \rightarrow B^k \rightarrow Z^k \rightarrow H^k(C) \rightarrow 0$
split.  It is easy to check that with respect to the decomposition
$C^k \cong Z^k \oplus B^{k+1} \cong B^k \oplus H^k(C) \oplus B^{k+1}$,
$C$ decomposes as a direct sum of chain complexes 
$0 \rightarrow H^k(C) \rightarrow 0$
and
$0 \rightarrow B^k \xrightarrow{\id} B^k \rightarrow 0$.
The Proposition now follows from the fact that the latter chain complex is homotopy
equivalent to the zero complex.
\end{proof}

\begin{proposition}\label{homotopy-2}
Any $\Z_2$-graded chain complex
$\mf{C^1}{C^0}{d^1}{d^0}$
is homotopy equivalent to a $\Z_2$-graded chain complex with zero differential.
\end{proposition}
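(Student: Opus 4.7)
The plan is to mimic the proof of \autoref{homotopy-0} in the $\Z_2$-graded setting. First I would set $Z^i \ce \ker d^i \subseteq C^i$ and $B^i \ce \im d^{1-i} \subseteq C^i$ for $i \in \{0,1\}$. Since the potential of $C$ is $0$, the relation $d^2 = 0$ gives $B^i \subseteq Z^i$, so the homology $H^i \ce Z^i/B^i$ is well defined.

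Because $\Q$-vector spaces are free as modules, both short exact sequences
\[ 0 \to Z^i \to C^i \xrightarrow{d^i} B^{1-i} \to 0 \quad\text{and}\quad 0 \to B^i \to Z^i \to H^i \to 0 \]
split. Choosing splittings for each $i \in \{0,1\}$ yields a decomposition $C^i \cong B^i \oplus H^i \oplus A^i$, where $A^i$ is a complement of $Z^i$ in $C^i$. Under this decomposition, $d^i$ vanishes on $B^i \oplus H^i$ and restricts to an isomorphism $A^i \xrightarrow{\sim} B^{1-i} \subseteq C^{1-i}$.

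With respect to the decompositions of $C^0$ and $C^1$, the complex $C$ splits as a direct sum of three $\Z_2$-graded subcomplexes: a complex whose degree-$i$ part is $H^i$ and whose differentials vanish, and, for each $i \in \{0,1\}$, an acyclic ``identity'' complex whose degree-$i$ part is $A^i$ and whose degree-$(1-i)$ part is $B^{1-i}$, connected by the isomorphism $d^i$ in one direction and the zero map in the other. Each such identity complex is nullhomotopic, with the inverse isomorphism serving as an explicit contracting homotopy, so taking the direct sum of these homotopy equivalences exhibits $C$ as homotopy equivalent to the $\Z_2$-graded complex $(H^1, H^0)$ with zero differential.

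I expect no serious obstacle: the argument is essentially a line-by-line translation of \autoref{homotopy-0}, with the role of the $\Z$-grading played by $\Z_2$. The only point requiring care is that the chosen splittings must respect the $\Z_2$-grading, but this is automatic since the decompositions are performed separately in each of the two degrees.
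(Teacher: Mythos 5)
Your proof is correct and follows essentially the same route as the paper: split the two short exact sequences in each $\Z_2$-degree, decompose $C$ into its homology plus acyclic ``identity'' summands, and contract the latter. The only cosmetic difference is that the paper lists $H^0$ and $H^1$ as two separate one-term summands rather than a single homology subcomplex.
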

\begin{proof}
Arguing as in the proof of the previous Proposition, we may decompose $C$ as
a direct sum of 
$\mf{H^1(C)}{0}{}{}$, $\mf{0}{H^0(C)}{}{}$, $\mf{B^1}{B^1}{0}{\id}$ and $\mf{B^0}{B^0}{\id}{0}$,
where the latter two complexes are homotopy equivalent to zero complexes.
\end{proof}

\section{Topological considerations}\label{section-topology}

In this section, we show that we may assume that the inner tangle is presented
in a specific form.

\begin{samepage}
\begin{definition}
We say that a 2-tangle is in braid form if it is represented in the following way, where
the rectangle represents an open braid.
\begin{center}
\begin{tikzpicture}[thick]
\foreach \x in {0.3, 0.6} {\draw[->] (\x,-0.6)--(\x,1.6);}
\foreach \x in {0.9, 1.2, 1.7} {\draw[->] (\x,0)--(\x,1);}
\draw[fill=white](0,.3)rectangle(2,.7);
\draw (0.9,1) to[out=90,in=90] (3.1,1) to[out=-90,in=90] (3.1,0) to[out=-90,in=-90] (0.9,0);
\draw (1.2,1) to[out=90,in=90] (2.8,1) to[out=-90,in=90] (2.8,0) to[out=-90,in=-90] (1.2,0);
\draw (1.7,1) to[out=90,in=90] (2.3,1) to[out=-90,in=90] (2.3,0) to[out=-90,in=-90] (1.7,0);
\node at (1.5,.5){$\cdots$};
\end{tikzpicture}
\end{center}
\end{definition}
\end{samepage}
\begin{theorem}\label{braid-form}
Let $L$ be an oriented link and $L'$ be a mutant of $L$ obtained by
positive mutation.  Then the mutation can be represented
on a diagram whose inner tangle is given in braid form by a transformation
of type $R_y$.
\end{theorem}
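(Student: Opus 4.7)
The plan is to reduce the statement in two steps: first convert any positive mutation to one of type $R_y$, then put the inner tangle into braid form via an Alexander-type theorem.

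I would begin by observing that in braid form, with all strands oriented upward and auxiliary components closed off to one side, the endpoint orientations are $(\mathrm{in},\mathrm{in},\mathrm{out},\mathrm{out})$ at $(a,b,c,d)$. A direct check shows that $R_x$ and $R_z$ reverse this pattern while $R_y$ preserves it, so $R_y$ is the only mutation type compatible with braid form. Given a positive mutation with $L=T\cup T'$ and $L'=R_\ast(T)\cup T'$, the endpoint orientations of $T$ are determined by $\ast$ up to global reversal: $(\mathrm{in},\mathrm{in},\mathrm{out},\mathrm{out})$ for $R_y$, $(\mathrm{in},\mathrm{out},\mathrm{in},\mathrm{out})$ for $R_x$, and $(\mathrm{in},\mathrm{out},\mathrm{out},\mathrm{in})$ for $R_z$. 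For $\ast=y$, there is nothing to do. For $\ast=x$, rotating the entire diagram by $90^\circ$ in the plane swaps the $x$- and $y$-axes and converts the mutation to type $R_y$, with the endpoint orientations adjusting accordingly.

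For $\ast=z$, planar rotation does not help, since $R_z$ commutes with planar rotations. Here I would apply an ambient orientation-preserving diffeomorphism $\tilde\phi$ of $S^3$ that preserves the $3$-ball $B$ containing $T$ setwise and whose restriction to $\partial B$ swaps the marked points $b,d$ while fixing $a,c$. Such a $\tilde\phi$ exists because the transposition $(bd)$ of four marked points on the $2$-sphere is realized by an orientation-preserving half-twist along an arc joining $b$ and $d$, and any diffeomorphism of $\partial B$ extends across both $3$-balls it bounds. Since every orientation-preserving self-diffeomorphism of $S^3$ is isotopic to the identity, $\tilde\phi(L)\cong L$. Setting $\phi=\tilde\phi|_B$, a direct computation on the four marked points gives $\phi\circ R_z = R_y\circ\phi$, and the Alexander trick then furnishes a boundary-fixing isotopy between $R_y\circ\phi$ and $\phi\circ R_z$ as diffeomorphisms of $B$. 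Consequently the new decomposition $L\cong\phi(T)\cup\tilde\phi(T')$ realizes the same mutant $L'$ via $R_y$ applied to $\phi(T)$, and the endpoint orientations of $\phi(T)$ are now $(\mathrm{in},\mathrm{in},\mathrm{out},\mathrm{out})$.

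Having reduced to positive mutation of type $R_y$ with the correct endpoint orientations, I would invoke the standard Alexander-type braiding theorem for $2$-tangles: any $2$-tangle whose four external strands are coherently oriented upward can be isotoped, rel its boundary endpoints, to an open braid on some number of strands with two distinguished through-strands carrying the four external endpoints and the remaining strands closed off by loops on one side of the braid. This final isotopy is supported inside the disk, preserves the outer tangle, and preserves the $R_y$ mutation structure, completing the proof. The main obstacle is the $R_z$ case, which cannot be handled by diagrammatic symmetries alone; introducing the ambient half-twist diffeomorphism $\tilde\phi$ is the essential ingredient, and once it is in place everything reduces to the elementary permutation identity $\phi\circ R_z=R_y\circ\phi$ together with the Alexander trick.
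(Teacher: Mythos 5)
Your overall strategy is the same as the paper's: first reduce an arbitrary positive mutation to an $R_y$-mutation on a tangle whose endpoints are oriented two-in/two-out at adjacent positions, then braid that tangle rel its endpoints. Your treatment of the $R_z$ case --- a half-twist along an arc joining $b$ and $d$ --- is exactly the content of \autoref{figure-mutation-type}, where the half-twist appears as the extra crossing between the two concentric circles (with the compensating crossing pushed into the outer tangle). However, the Alexander trick does not apply where you invoke it: $R_y\circ\phi$ and $\phi\circ R_z$ agree only on the four marked points, not on all of $\partial B$, and the mapping class group of the four-punctured sphere is nontrivial, so inducing the same permutation of marked points does not by itself give an isotopy of the boundary restrictions. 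You need the stronger fact that the half-twist $\phi$ conjugates $R_z$ to $R_y$ in the mapping class group of $(S^2, 4\ \mathrm{pts})$ --- true, since the three involutions span a normal Klein four-group there and conjugation by $\phi$ permutes them according to the induced permutation of the three pairings --- or, more simply, the explicit diagrammatic isotopy the paper draws. Only once the boundary restrictions have been made to agree does the Alexander trick extend the isotopy over $B$.

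The more substantial gap is the second step. The ``standard Alexander-type braiding theorem for $2$-tangles'' you cite is precisely \autoref{lemma-braid-form}, and it is not an off-the-shelf result: the issue is exactly your parenthetical claim that the braiding isotopy ``is supported inside the disk,'' which is where all the work lies. The paper proves this by closing the tangle with two arcs $\alpha$ and $\beta$ and running the Yamada--Vogel algorithm; the key observation is that $\alpha$ and $\beta$ lie on coherently oriented Seifert circles, so the unbounded region contains no reducing arc and every Reidemeister II move of the algorithm can be pushed inside the circle. Without an argument of this kind the second half of the theorem is asserted rather than proved.
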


The following two lemmas immediately imply the Theorem.

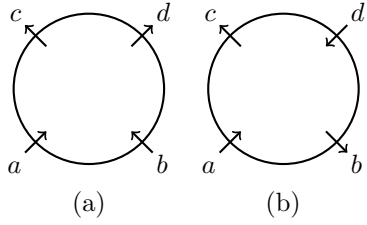
\begin{figure}
\centering
\begin{tabular}{cc}
\begin{tikzpicture}[thick]
\draw (0,0) circle (1cm);
\draw[->] (  45:0.8)--(  45:1.2) node[ur] {$d$};
\draw[->] ( 135:0.8)--( 135:1.2) node[ul] {$c$};
\draw[<-] ( -45:0.8)--( -45:1.2) node[lr] {$b$};
\draw[<-] (-135:0.8)--(-135:1.2) node[ll] {$a$};
\end{tikzpicture}
&
\begin{tikzpicture}[thick]
\draw (0,0) circle (1cm);
\draw[<-] (  45:0.8)--(  45:1.2) node[ur] {$d$};
\draw[->] ( 135:0.8)--( 135:1.2) node[ul]  {$c$};
\draw[->] ( -45:0.8)--( -45:1.2) node[lr] {$b$};
\draw[<-] (-135:0.8)--(-135:1.2) node[ll]  {$a$};
\end{tikzpicture}
\\
(a) & (b)
\end{tabular}
\caption{The two possible orientations of the endpoints of inner tangle}
\label{figure-orientations}
\end{figure}

\begin{lemma}\label{lemma-orientations}
We may assume that the endpoints of the inner tangle are oriented as in
\autoref{figure-orientations}(a) and that the transformation of the inner
tangle is of type $R_y$.
\end{lemma}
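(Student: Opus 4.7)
The plan is to analyze the orientation pattern at the four boundary points and reduce every positive mutation to the canonical form by ambient isotopies. The condition for positive mutation is $\epsilon_p(R(T))=\epsilon_p(T)$ at every boundary point $p$, so the involution $R$ must permute incoming endpoints among themselves and outgoing among themselves. There are six in/out patterns (choose the two incoming from among the four points), and rotating the entire diagram by a multiple of $90^\circ$, which is an ambient isotopy that cyclically permutes the labels $a,b,c,d$, leaves exactly two equivalence classes: an \emph{adjacent} pattern which, after a further rotation if necessary, can be taken to be $\{a,b\}$ incoming and $\{c,d\}$ outgoing (configuration (a)), and a \emph{diagonal} pattern which can be taken to be $\{a,d\}$ incoming and $\{b,c\}$ outgoing (configuration (b)).

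In configuration (a) a direct check of the three involutions shows that $R=R_y$ is the only one that preserves the orientation pattern, since $R_x$ and $R_z$ each flip the orientation at two of the four endpoints and would yield negative mutation. This settles the lemma in case (a). In configuration (b) the only consistent choice is $R=R_z$, and this is the case that needs further work.

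To handle configuration (b), the plan is to modify the disk-tangle decomposition via an ambient isotopy of the link that produces the same mutant $L'$ but whose new inner tangle is in configuration (a) with associated transformation $R_y$. The component-preserving hypothesis is essential here: it forces all four boundary points to lie on a single component of $L$, so traversing that component one meets them in an alternating in/out sequence. I would use this alternation to construct a new disk $D''$ by sliding $\partial D$ along a short outer arc of the mutation component joining two adjacent boundary points of opposite orientation, thereby trading the diagonal in/out pattern for an adjacent one. The $180^\circ$ rotation which realized $R_z$ on the old inner tangle then translates into the reflection $R_y$ on the reshaped one. The hardest step will be verifying that this disk modification can always be carried out so that the new boundary still meets the link in exactly four points with the required orientation pattern and that the resulting $R_y$-mutation produces the same mutant link $L'$; this is the systematic form of the coincidence visible in the Kinoshita--Terasaka / Conway example (\autoref{figure-mutantpair}), where a symmetry of the inner tangle lets the same mutation be realized by both $R_y$ and $R_z$.
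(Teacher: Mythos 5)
Your opening case analysis is correct and coincides with the paper's: up to rotation the boundary orientation pattern is either adjacent (\autoref{figure-orientations}(a)) or diagonal (\autoref{figure-orientations}(b)), and the unique positive mutation is $R_y$ in the first case and $R_z$ in the second. The gap is in the reduction of the diagonal/$R_z$ case to the adjacent/$R_y$ case, which you yourself flag as ``the hardest step'' and do not carry out. The mechanism you propose --- sliding $\partial D$ along an outer arc of the mutation component --- does not work as described: if the finger of the disk engulfs an entire outer arc joining two boundary points, those two points disappear from $\partial D\cap L$ and the disk meets the link in only two points, destroying the mutation structure; if it engulfs only part of an arc, it merely relocates one boundary point, and you must then re-standardize the four positions and check that the conjugated involution is $R_y$ and that regluing still yields $L'$. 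That verification is exactly the content of the lemma and is left open. Moreover, the appeal to the component-preserving hypothesis is a red herring: the lemma (and \autoref{braid-form}) is stated for arbitrary positive mutation, and the outer tangle plays no role in the correct argument.

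The paper's resolution is a purely local construction, independent of the outer tangle, shown in \autoref{figure-mutation-type}: insert a cancelling pair of crossings (a Reidemeister II pair, so the link is unchanged) straddling an enlarged boundary circle, positioned so that the inner halves of the two new arcs reroute the endpoints $b$ and $d$ past each other. The enlarged inner tangle then has the adjacent pattern of type (a), and an explicit isotopy (the bottom row of the figure) identifies the result of reflecting the enlarged tangle through the $y$-axis and regluing with the result of rotating the original tangle by $180^\circ$ about the $z$-axis and regluing. To complete your proof you would need to replace the ``slide along an outer arc'' step with such an explicit local rerouting and actually exhibit the isotopy showing the two decompositions produce the same mutant.
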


\begin{lemma}\label{lemma-braid-form}
Any 2-tangle with endpoints oriented as in \autoref{figure-orientations}(a)
can be represented by a diagram in braid form.
\end{lemma}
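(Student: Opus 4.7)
The plan is to prove the lemma as a relative version of Alexander's theorem for tangles. Starting with a $2$-tangle $T$ whose strands are both oriented from bottom to top, I would first apply a cosmetic isotopy so that the four endpoints are arranged with $a$ and $b$ on the bottom of a rectangle containing $T$ and $c$ and $d$ on the top, with $a$ directly below $c$ and $b$ directly below $d$.

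Next, I would pick a point $p$ to the right of this rectangle, outside $T$, and apply Alexander's braiding procedure relative to $p$. Using the foliation of $\mathbb{R}^2 \setminus \{p\}$ by rays emanating from $p$, I would iteratively push small pieces of arcs of $T$ across $p$ until every arc of $T$ winds monotonically counterclockwise around $p$; a standard complexity argument of the type used in Alexander's original theorem ensures termination. After the algorithm, each closed component of $T$ becomes a loop winding some positive number of times around $p$, and each such loop can be redrawn as some number of parallel upward strands joined by nested arcs to the right of $p$. The two open arcs of $T$, having endpoints fixed on the left of $p$, cannot wind around $p$ at all after the algorithm and must therefore run monotonically upward on the left. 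The combined picture is exactly braid form: the two open arcs are the leftmost two strands of a braid, and the closed loops fill out the remaining strands, closed on the right in the required nested fashion.

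The main obstacle is ensuring that Alexander's procedure can be carried out in a way that preserves the straight-upward shape of the two open arcs, rather than causing them to acquire winding around $p$. I would address this by prioritizing moves on closed components: whenever a move would affect an open arc, instead push the obstructing portion of a closed arc around the open arc, introducing additional crossings inside the braid box but leaving the open arc intact. A modified complexity argument on the total signed winding of the closed components around $p$ (with a secondary count to handle the newly introduced crossings) shows that this variant of the algorithm terminates as well, and the result is the desired braid-form diagram.
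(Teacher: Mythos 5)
Your overall strategy --- a relative Alexander theorem for $2$-tangles --- is reasonable in spirit, and your setup correctly uses the orientation hypothesis (both strands running bottom to top). But the core of the argument has a gap, and you have in fact named the right obstacle without resolving it. In Alexander's procedure, winding around the axis $p$ is created precisely when some portion of an arc runs \emph{clockwise} around $p$: that portion must be thrown over $p$, after which the arc travels counterclockwise once around $p$ in that region. If the clockwise portion belongs to one of the two \emph{open} arcs --- which happens for a generic tangle, since nothing prevents an open arc from doubling back inside the box --- then the open arc itself must be thrown over $p$ and thereby acquires winding. Your proposed fix, ``push the obstructing portion of a closed arc around the open arc,'' addresses a different and harmless situation: it helps only when a closed component is in the way, not when the open arc itself is incoherently oriented with respect to $p$. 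So the claim that the open arcs ``cannot wind around $p$ at all after the algorithm'' is unjustified. And if an open arc does end up winding around $p$, the resulting diagram has open strands passing through the closure region on the right, which is not the braid form defined in the paper; removing that winding afterwards is a nontrivial, Markov-move-type problem that your complexity argument does not touch.

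The paper sidesteps this by using the Yamada--Vogel algorithm rather than Alexander's throwing move: close the tangle by arcs $\alpha$ from $c$ to $a$ and $\beta$ from $d$ to $b$, and repeatedly perform Reidemeister II moves along reducing arcs joining incoherently oriented Seifert circles. The orientation hypothesis guarantees that the Seifert circles containing $\alpha$ and $\beta$ are coherently oriented, so no reducing arc is ever needed in the unbounded region, and every move can be pushed inside the disk containing $T$, leaving $\alpha$ and $\beta$ untouched. The output has coherently oriented, nested Seifert circles, which is exactly braid form. If you want to keep an Alexander-style argument, you would need a mechanism for making the open arcs radially monotone without throwing them over the axis --- which essentially forces you into Vogel's Reidemeister II mechanism anyway.
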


\begin{proof}[Proof (of \autoref{lemma-orientations})]
If the tangle has two adjacent endpoints with the same orientation, it is
isotopic to a tangle with endpoints as depicted in \autoref{figure-orientations}(a)
and the only positive mutation is of type $R_y$.  Otherwise we are in case (b)
of \autoref{figure-orientations}, where the only positive mutation is of type $R_z$.
But we can realize this type of mutation by $R_y$-mutation on a tangle of type
(a), as illustrated in \autoref{figure-mutation-type}.
\end{proof}

\begin{figure}
\begin{tabular}{ccc}
\begin{tikzpicture}[baseline, thick, scale=0.6]
\path[use as bounding box] (-1.3,-1.3) rectangle (1.3,1.3);
\draw (0,0) circle (1cm);
\draw[<-] (  45:0.7)--(  45:1.3);
\draw[->] ( 135:0.7)--( 135:1.3);
\draw[->] ( -45:0.7)--( -45:1.3);
\draw[<-] (-135:0.7)--(-135:1.3);
\node at (0,0) {$R$};
\end{tikzpicture}
&
$\approx$
&
\begin{tikzpicture}[baseline, thick, scale=0.6]
\path[use as bounding box] (-1.8,-2.3) rectangle (4.3,2.3);
\coordinate (d) at (  45:0.7);
\coordinate (c) at ( 135:0.7);
\coordinate (b) at ( -45:0.7);
\coordinate (a) at (-135:0.7);

\coordinate (dd) at ($ (  45:2.3) + (0.5,0) $);
\coordinate (cc) at ($ ( 135:2.3) + (0.5,0) $);
\coordinate (bb) at ($ ( -45:2.3) + (0.5,0) $);
\coordinate (aa) at ($ (-135:2.3) + (0.5,0) $);

\coordinate (ddd) at ($ (  45:2.3) + (2.5,0) $);
\coordinate (bbb) at ($ ( -45:2.3) + (2.5,0) $);

\draw (0,0) circle (1cm);
\node at (0,0) {$R$};
\draw (0.5,0) circle (2cm);
\draw[<-] (a) to[out=-135,in=  45] (aa);
\draw[->] (c) to[out= 135,in= -45] (cc);
\draw     (b) to[out= -45,in=-135] (dd);
\fill[white] (1.1,-0.15) rectangle (1.6,0.15);
\draw[<-] (d) to[out=  45,in= 135] (bb);

\draw[->] (dd) to[out= 45,in= 135] (bbb);
\fill[white] (3,-0.15) rectangle (3.5,0.15);
\draw     (bb) to[out=-45,in=-135] (ddd);
\end{tikzpicture}
\\
\begin{tikzpicture}[baseline, thick, scale=0.6]
\path[use as bounding box] (-1.3,-1.3) rectangle (1.3,1.3);
\begin{scope}[transform canvas={rotate around={180:(0,0)}}]
\draw (0,0) circle (1cm);
\draw[<-] (  45:0.7)--(  45:1.3);
\draw[->] ( 135:0.7)--( 135:1.3);
\draw[->] ( -45:0.7)--( -45:1.3);
\draw[<-] (-135:0.7)--(-135:1.3);
\node at (0,0) {$R$};
\end{scope}
\end{tikzpicture}
&
$\approx$
&
\begin{tikzpicture}[baseline, thick, scale=0.6]
\path[use as bounding box] (-1.8,-2.3) rectangle (4.3,2.3);
\coordinate (d) at (  45:0.7);
\coordinate (c) at ( 135:0.7);
\coordinate (b) at ( -45:0.7);
\coordinate (a) at (-135:0.7);

\coordinate (dd) at ($ (  45:2.3) + (0.5,0) $);
\coordinate (cc) at ($ ( 135:2.3) + (0.5,0) $);
\coordinate (bb) at ($ ( -45:2.3) + (0.5,0) $);
\coordinate (aa) at ($ (-135:2.3) + (0.5,0) $);

\coordinate (ddd) at ($ (  45:2.3) + (2.5,0) $);
\coordinate (bbb) at ($ ( -45:2.3) + (2.5,0) $);

\begin{scope}[transform canvas={xshift=0.6cm,xscale=-1}]
\draw (0,0) circle (1cm);
\node at (0,0) {$R$};
\draw (0.5,0) circle (2cm);
\draw[<-] (a) to[out=-135,in=  45] (aa);
\draw     (c) to[out= 135,in= -45] (cc);
\draw[<-] (d) to[out=  45,in= 135] (bb);
\fill[white] (1.1,-0.15) rectangle (1.6,0.15);
\draw[->] (b) to[out= -45,in=-135] (dd);
\end{scope}

\draw[->] (dd) to[out= 45,in= 135] (bbb);
\fill[white] (3,-0.15) rectangle (3.5,0.15);
\draw     (bb) to[out=-45,in=-135] (ddd);
\end{tikzpicture}
\end{tabular}
\caption{$R_z$ mutation on a tangle of type (b) is equivalent to $R_y$ mutation on a tangle of type (a)}
\label{figure-mutation-type}
\end{figure}
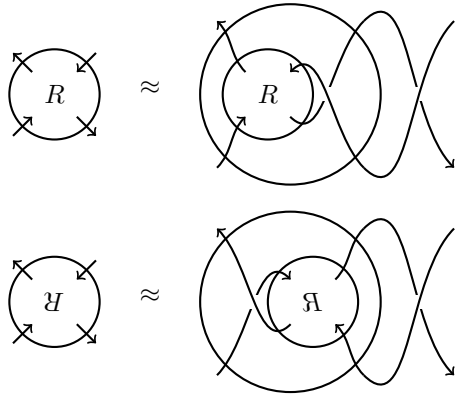

\begin{proof}[Proof (of \autoref{lemma-braid-form})]
The proof uses a slight variation of the Yamada-Vogel~\cites{yamada,vogel}
algorithm to prove an analog of Alexander's Theorem for 2-tangles.  We follow
Birman and Brendle~\cite{birman-brendle}.

\begin{figure}
\centering
\begin{tabular}{cc}
\begin{tikzpicture}[thick, baseline=0cm]
\draw ( 45:-1) to[out=-135,in=150] (0,-1.5) to[out= -30,in=-90] ( 1.7,0);
\draw[<-] (1.7,0) to[out=90,in= 30] (0, 1.5) to[out=-150,in=135] (135:1);
\draw[upper] (135:-1) to[out= -45,in= 30] (0,-1.5) to[out=-150,in=-90] (-1.7,0);
\draw[upper,<-] (-1.7,0) to[out=90,in=150] (0, 1.5) to[out= -30,in= 45] ( 45:1);
\draw (0,0) circle (1cm);
\node[cr] at (  45:1) {$d$};
\node[cl] at ( 135:1) {$c$};
\node[cr] at ( -45:1) {$b$};
\node[cl] at (-135:1) {$a$};
\node[cl] at (-1.7,0)   {$\alpha$};
\node[cr] at ( 1.7,0)   {$\beta$};
\end{tikzpicture}
&
\begin{tikzpicture}[thick, baseline=0cm]
\draw (0,0) circle (1cm);
\draw[looseness=1.5   ] ( 45:-1) to[out=-135,in=-90] (-1.6,0);
\draw[looseness=1.5,<-] (-1.6,0) to[out=90,in=135] (135:1);
\draw[looseness=1.5   ] (135:-1) to[out=-45,in=-90] (1.6,0);
\draw[looseness=1.5,<-] (1.6,0)  to[out=90,in=45] ( 45:1);
\draw[dotted] ( 135:1.2) to[out= 45,in= 135] ( 45:1.2);
\draw[dotted] (-135:1.2) to[out=-45,in=-135] (-45:1.2);
\node[cr] at (  45:1) {$d$};
\node[cl] at ( 135:1) {$c$};
\node[cr] at ( -45:1) {$b$};
\node[cl] at (-135:1) {$a$};
\node[cl] at (-1.6,0)   {$\alpha$};
\node[cr] at ( 1.6,0)   {$\beta$};
\end{tikzpicture}
\\ (a) & (b)
\end{tabular}
\caption{A closure of the tangle and its Seifert picture}
\label{figure-yamada-vogel}
\end{figure}
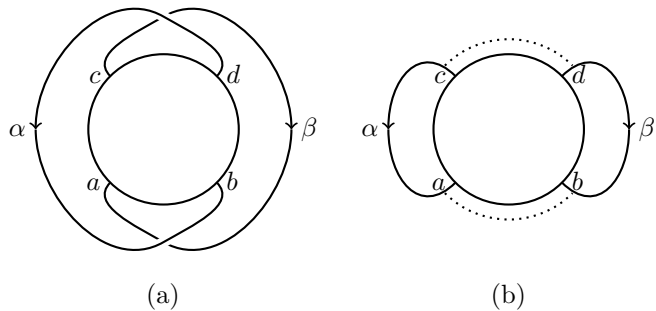

\begin{figure}
\centering
\begin{tabular}{cc}
\begin{tikzpicture}[baseline, thick]
\foreach \x in {0.2 ,0.7, 1, 1.9, 2.2, 2.7} {\draw[->] (\x,0)--(\x,1);}
\foreach \x in {1.3, 1.6} {\draw[->] (\x,-0.6)--(\x,1.6);}
\draw[fill=white](-0.1,.4)rectangle(3,.7);
\draw (1.9,1) to[out=90,in=90] (4.1,1) to[out=-90,in=90] (4.1,0) to[out=-90,in=-90] (1.9,0);
\draw (2.2,1) to[out=90,in=90] (3.8,1) to[out=-90,in=90] (3.8,0) to[out=-90,in=-90] (2.2,0);
\draw (2.7,1) to[out=90,in=90] (3.3,1) to[out=-90,in=90] (3.3,0) to[out=-90,in=-90] (2.7,0);

\draw (  1,1) to[out=90,in=90] (-1.2,1) to[out=-90,in=90] (-1.2,0) to[out=-90,in=-90] (  1,0);
\draw (0.7,1) to[out=90,in=90] (-0.9,1) to[out=-90,in=90] (-0.9,0) to[out=-90,in=-90] (0.7,0);
\draw (0.2,1) to[out=90,in=90] (-0.4,1) to[out=-90,in=90] (-0.4,0) to[out=-90,in=-90] (0.2,0);
\node at (2.5,.5){$\cdots$};
\node at (0.5,.5){$\cdots$};
\end{tikzpicture}
&
\begin{tikzpicture}[baseline, thick]
\foreach \x in {0.2 ,0.7, 1, 1.9, 2.2, 2.7} {\draw[->] (\x,0)--(\x,1);}
\draw (1.9,1) to[out=90,in=90] (4.1,1) to[out=-90,in=90] (4.1,0) to[out=-90,in=-90] (1.9,0);
\draw (2.2,1) to[out=90,in=90] (3.8,1) to[out=-90,in=90] (3.8,0) to[out=-90,in=-90] (2.2,0);
\draw (2.7,1) to[out=90,in=90] (3.3,1) to[out=-90,in=90] (3.3,0) to[out=-90,in=-90] (2.7,0);

\draw (  1,1) to[out=90,in=90] (4.4,1) to[out=-90,in=90] (4.4,0) to[out=-90,in=-90] (  1,0);
\draw (0.7,1) to[out=90,in=90] (4.7,1) to[out=-90,in=90] (4.7,0) to[out=-90,in=-90] (0.7,0);
\draw (0.2,1) to[out=90,in=90] (5.2,1) to[out=-90,in=90] (5.2,0) to[out=-90,in=-90] (0.2,0);

\draw[->,upper,looseness=0.6]
  (0.2,-1.5)--(0.2,-1.4) to[out=90,in=-90] (1.3,0.1)--(1.3,0.9) to[out=90,in=-90] (0.2,2.4)--(0.2,2.5);
\draw[->,upper,looseness=0.6]
  (0.5,-1.5)             to[out=90,in=-90] (1.6,  0)--(1.6,  1) to[out=90,in=-90] (0.5,2.5);

\draw[fill=white](-0.1,.4)rectangle(3,.7);
\node at (2.5,.5){$\cdots$};
\node at (0.5,.5){$\cdots$};
\end{tikzpicture}
\end{tabular}
\caption{Transforming the diagram into braid form}
\label{figure-braid-form}
\end{figure}
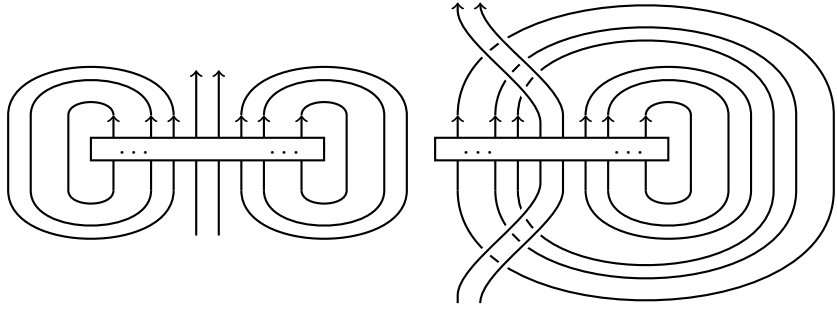

Close the tangle by two arcs $\alpha$ from $c$ to $a$ and $\beta$ from $d$ to
$b$ as in \autoref{figure-yamada-vogel}(a).  The algorithm works by repeatedly
performing a Reidemeister II move in a small neighborhood of a so-called
\emph{reducing arc}.  The algorithm is performed on the Seifert picture of
the link diagram, which is depicted in \autoref{figure-yamada-vogel}(b).  A reducing
arc is an arc connecting an incoherently oriented pair of Seifert circles that intersects
the Seifert picture only at its endpoints.  Since the Seifert circles that
$\alpha$ and $\beta$ belong to are coherently oriented, the
the unbounded region of the Seifert picture in \autoref{figure-yamada-vogel}(b)
cannot contain a reducing arc.  Hence we may push the reducing arc into
the circle.  The algorithm now gives us a tangle diagram whose Seifert circles
and Seifert arcs (from $a$ to $c$ and from $b$ to $d$) are coherently oriented.
This implies that all Seifert circles lie nested inside each other to
the left of the left arc and to the right of the right arc, in other words
it can be represented by a diagram in the form illustrated on the left
of \autoref{figure-braid-form}.  But this can be easily transformed into
braid form, as seen on the right of \autoref{figure-braid-form}.

\end{proof}

\section{Behavior of the \KR chain complex under reflection}\label{section-reflection}
\begin{lemma}\label{reflection}
Let $D$ be an oriented (possibly singular) tangle diagram and $\bar D$ be the reflection of $D$.
Label the endpoints of $D$ by $e_0, e_1, \dotsc, e_{2k-1}$, and the
corresponding endpoints of $\bar D$ by $e'_0, e'_1, \dotsc, e'_{2k-1}$.
Then $C_p(\bar D)$ = $\phi(C_p(D))$, where $\phi: R(D) \to R(\bar D)$ is the ring
homomorphism given by $\phi(x_{e_i}) = - x_{e'_i}$.
\end{lemma}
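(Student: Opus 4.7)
The plan is to reduce the claim to a check on the basic building blocks $D_r$, $D_s$, $D_+$, $D_-$ depicted in \autoref{figure-smoothings} and then to extend it to arbitrary diagrams via the gluing formula of \autoref{gluing}. The substitution $\phi$ acts diagonally on edge variables by $x_e \mapsto -x_{\bar e}$, so linear expressions such as $x_c-x_a$ are sent to their negatives while the degree-two expressions $x_cx_d-x_ax_b$ are invariant. In particular, $\phi$ carries the relation $x_a+x_b-x_c-x_d$ at each $4$-valent vertex of $D$ to the negative of the corresponding relation in $R(\bar D)$ (reflection preserves the incoming/outgoing assignment at each vertex, since it preserves arrow orientations on individual edges), so $\phi$ descends to a well-defined graded ring map between the edge rings.

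For each basic piece, $C_p(D_i)$ is built from Koszul factorizations whose entries are precisely these linear and quadratic expressions. To compare $\phi(C_p(D_i))$ with $C_p(\bar D_i)$, I would use the elementary isomorphism $K(u;v)\cong K(-u;-v)$ obtained by negating the generator of one of the two summands; this absorbs the sign change on the linear entry induced by $\phi$. For $D_r$ and $D_s$ this reduces the verification to the statement that the implicit Koszul entries (those written as $*$) transform as expected under $\phi$, which follows by a direct computation using the vertex relation $x_a+x_b=x_c+x_d$. For $D_+$ and $D_-$, the same strategy applies to each of the two Koszul summands of the mapping cone, and one then needs to verify that the differentials $d_\pm$ (to be defined later in \autoref{section-reflection}) intertwine the sign-flipping isomorphisms on source and target. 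The whole analysis implicitly requires that $\phi(w_D)=w_{\bar D}$, which is automatic when the polynomial $p$ contains only even powers of $x$, as in the setting of \autoref{main}.

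The passage from basic pieces to arbitrary diagrams then follows from \autoref{gluing}: $C_p$ of a general diagram is a tensor product over $R(D)$ of the $C_p$'s of its basic pieces, and base change along $\phi$ commutes with tensor products. Since reflection of a tangle is visibly compatible with the decomposition into basic pieces (the reflection of a glued tangle is the gluing of the reflected pieces, and the identification of shared internal edges is respected by the correspondence $e\mapsto \bar e$), the identity $C_p(\bar D)=\phi(C_p(D))$ propagates from the base cases. The main obstacle will be the careful bookkeeping of signs and grading shifts in the $D_+$ and $D_-$ cases, where one must confirm that $d_\pm$ really is symmetric under the substitution $x\mapsto -x$ up to the chosen sign-flipping isomorphisms on its source and target.
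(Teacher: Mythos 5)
Your overall architecture --- verify the claim on the four elementary diagrams of \autoref{figure-smoothings} and then propagate it to arbitrary diagrams through the tensor-product description of $C_p(D)$ --- is exactly the paper's, and your remarks about the quadratic entries, the well-definedness of $\phi$ on the edge ring, and the need for $p$ to have only even powers are all correct. The problem is your sign bookkeeping for the linear entries, and getting that right is the entire content of the lemma. The correspondence $e_i \mapsto e'_i$ is the one induced by the reflection, which exchanges the positions of $a$ and $b$ and of $c$ and $d$; writing $x_{a'},\dotsc,x_{d'}$ for the positional labels of $\bar D$, one therefore has $\phi(x_c) = -x_{d'}$ and $\phi(x_a) = -x_{b'}$, so $\phi(x_c - x_a) = x_{b'} - x_{d'}$, which by the vertex relation $x_{a'}+x_{b'}=x_{c'}+x_{d'}$ equals $x_{c'}-x_{a'}$ \emph{on the nose} --- not its negative. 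Likewise $\phi(x_c - x_b) = x_{c'}-x_{b'}$ and $\phi(1)=1$, so every entry of the explicit factorizations, including both components of $d_\pm$, is carried exactly to the corresponding entry for $\bar D$. The minus sign from $x\mapsto -x$ is cancelled by the combination of the positional swap and the vertex relation; your claim that linear expressions ``are sent to their negatives'' misses this cancellation.

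Consequently, no auxiliary isomorphisms $K(u;v)\cong K(-u;-v)$ are needed, and you should not introduce them. The lemma asserts an equality $C_p(\bar D)=\phi(C_p(D))$, not merely an isomorphism, and the later argument depends on the stronger statement: \autoref{cone} requires the induced involution $\bar\cdot$ on $\hmf_2$ to be the \emph{identity on objects}, which is exactly the on-the-nose invariance of the Koszul entries $x_c-x_a$ and $(x_c-x_a)(x_c-x_b)$ under $\bar\cdot$. Your route, carefully executed, would still yield an isomorphism (the sign flips can be absorbed by negating one generator of each rank-one Koszul factor, and the differentials do intertwine), so it is salvageable for the lemma viewed up to isomorphism; but it both complicates the verification for $D_\pm$ and delivers a conclusion too weak to feed into \autoref{cone} without further repair.
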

\begin{proof}
If $D$ is one of the diagrams shown in \autoref{figure-smoothings}, $C_p(D)$ is one of
the following complexes of matrix factorizations.

\begin{center}
\begin{tabular}{cc}
$C_p(D_+)$:
&
$C_p(D_-)$:
\\
$\xymatrix{
R\{1-n\} \ar@<0.4ex>[rrr]^-{x_c-x_a} &&& R \ar@<0.4ex>[lll]^-{\frac{w}{x_c-x_a}} \\
R\{3-n\} \ar@<0.4ex>[rrr]^-{(x_c-x_a)(x_c-x_b)} \ar[u]^{x_c-x_b} &&& R \ar@<0.4ex>[lll]^-{\frac{w}{(x_c-x_a)(x_c-x_b)}} \ar[u]_{1}
}$
&
$\xymatrix{
R\{1-n\} \ar@<0.4ex>[rrr]^{(x_c-x_a)(x_c-x_b)} &&& R\{-2\} \ar@<0.4ex>[lll]^{\frac{w}{(x_c-x_a)(x_c-x_b)}} \\
R\{1-n\} \ar@<0.4ex>[rrr]^{x_c-x_a} \ar[u]^{1} &&& R \ar@<0.4ex>[lll]^{\frac{w}{x_c-x_a}} \ar[u]_{x_c-x_b}
}$
\\
\\
$C_p(D_r)$:
&
$C_p(D_s)$:
\\
$\xymatrix{
\qquad R \ar@<0.4ex>[rr]^-{x_c-x_a} && R\{n-1\} \ar@<0.4ex>[ll]^-{\frac{w}{x_c-x_a}}
}$
&
$\xymatrix{
R\{2-n\} \ar@<0.4ex>[rrr]^-{(x_c-x_a)(x_c-x_b)} &&& R\{-1\} \ar@<0.4ex>[lll]^-{\frac{w}{(x_c-x_a)(x_c-x_b)}}
}$
\end{tabular}
\end{center}

Note that $\phi(x_c - x_a) = -x_{d'}+x_{b'} = x_{c'}-x_{a'}$, $\phi(x_c-x_b) = -x_{d'}+x_{a'} = x_{c'}-x_{b'}$
and 
$\phi(w) = \phi \left(\sum_k a_{2k} (x_c^{2k}+x_d^{2k} - x_a^{2k}-x_b^{2k})\right)
         = \sum_k a_{2k} (x_{c'}^{2k}+x_{d'}^{2k} - x_{a'}^{2k}-x_{b'}^{2k}) = w'$.
Hence all maps in the above diagrams are mapped by $\phi$ to the same maps with
$x_{a'}$, $x_{b'}$, $x_{c'}$ and $x_{d'}$ in place of $x_a$, $x_b$, $x_c$ and
$x_d$, respectively, that is $\phi$ maps $C_p(D)$ to $C_p(\bar D)$.

The general case follows from (\ref{tensor}): It is clear that by taking
the internal edges of $D$ into consideration, we can extend $\phi$ to
an isomorphism between $R(D)$ and $R(\bar D)$.  Hence we get isomorphisms
$C_p(D_i) \otimes_{R(D_i)} R(D) \cong C_p(\bar D_i) \otimes_{R(\bar D_i)} R(\bar D)$,
which in turn induce an isomorphism $C_p(D) \cong C_p(\bar D)$.
\end{proof}
In light of the Lemma, we will simply denote the homomorphism $\phi$ by $\bar \cdot$.

\section{\KR Homology of 2-tangles}\label{section-2-tangles}

In this section, we investigate the \KR homology of 2-tangles in braid form.
Denote the variables corresponding to the endpoints $a$, $b$, $c$ and $d$ of
the tangle by $x_a$, $x_b$, $x_c$ and $x_d$, respectively.  The complex
associated to such a tangle is a complex of graded matrix factorizations
over the ring $R = \Q[x_a,x_b,x_c,x_d]\slash(x_a+x_b=x_c+x_d)$ with potential
$w = p(x_c)+p(x_d)-p(x_a)-p(x_b)$.  Let $\hmf_2$ denote the full subcategory
of $\hmf_w(R)$ whose objects are direct sums of shifts of $C_p(D_r)$ and $C_p(D_s)$.

\begin{theorem}\label{\hmf2}
Let $D$ a connected diagram of a 2-tangle in braid-form.
Then $C_p(D)$ is isomorphic in $\K(\hmf_w(R))$ to an object of $\K(\hmf_2)$.
\end{theorem}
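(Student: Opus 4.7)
The plan is to expand $C_p(D)$ using the cube-of-resolutions formalism. At each crossing of $D$, the complex $C_p(D_\pm)$ is a two-term complex whose chain groups are shifts of $C_p(D_r)$ and $C_p(D_s)$. Iterating at every crossing and applying the gluing formula of \autoref{gluing}, we express $C_p(D)$ as the total complex of a cube whose vertices are indexed by choices of resolution $v$ (each crossing resolved to either $D_r$ or $D_s$); the chain group at vertex $v$ is $C_p(D_v)$, where $D_v$ is the resulting singular $4$-valent web with the same boundary $\{a,b,c,d\}$ as $D$. Thus it suffices to show that each $C_p(D_v)$ lies in $\hmf_2$, since the morphisms of the cube will then automatically be morphisms in $\hmf_2$ (which is a full subcategory).

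The remaining step is to show that for each resolution $D_v$ of a braid-form $2$-tangle, $C_p(D_v)$ is isomorphic in $\hmf_w(R)$ to a direct sum of shifts of $C_p(D_r)$ and $C_p(D_s)$. For this, I would invoke the matrix-factorization analogues of the standard MOY web simplifications developed in \cite{khr1}:
\begin{itemize}
\item Any disjoint closed loop may be removed, producing a tensor factor of a finite-dimensional graded $\Q$-vector space.
\item A bigon (two parallel edges between two $4$-valent vertices) can be replaced by a single edge tensored with a graded vector space.
\item A square/I-H relation rewrites a $4$-cycle of singular vertices as a direct sum of two simpler webs.
\end{itemize}
These isomorphisms are well known in the $\sln$ setting and carry over to the general polynomial potential $p$ by direct verification in the matrix-factorization category.

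The braid-form hypothesis is crucial here: the underlying $4$-valent graph of $D_v$ consists of an open braid resolved to a planar web, closed up by caps on all but two strands, with $4$ external endpoints. I would argue by induction on the number of singular vertices. The base case (no singular vertices) is a union of oriented arcs and closed loops, so closed-loop removal reduces it to a shift of $C_p(D_r)$. In the inductive step, either a bigon or an I-H configuration is forced to appear somewhere in $D_v$ (this is where planarity and the braid-closure structure are used), and applying the corresponding MOY reduction strictly decreases the number of singular vertices while preserving membership in the class of direct sums of shifts of $C_p(D_r)$ and $C_p(D_s)$. Connectedness of $D$ prevents isolated components from producing unexpected tensor factors.

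The main obstacle is organizing this induction for arbitrary braid words, specifically guaranteeing the availability of a simplifying move at each step. I expect the restrictive planar structure of a braid-form resolution, together with connectedness, to force either a closed loop, a bigon, or a reducible square to be present at every non-terminal stage, so that every branch of the induction terminates in $\hmf_2$.
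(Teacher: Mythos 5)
Your overall strategy coincides with the paper's: expand $C_p(D)$ over its resolutions and simplify each singular braid-form web using categorified MOY moves. The reduction ``it suffices to show each $C_p(D_v)$ lies in $\hmf_2$'' is logically sound (fullness of $\hmf_2$ does the rest), but two genuine gaps remain. The first is the step you explicitly defer: guaranteeing that a simplifying configuration exists at every non-terminal stage is the real combinatorial content of the theorem, not a detail. The paper proves it as a separate lemma by induction on the braid index, using a complexity function borrowed from Wu, showing that every connected singular braid diagram of complexity greater than $1$ contains a removable oriented smoothing, a lone rightmost resolved crossing, or a subdiagram of type $D_{II}$, $D_{III}$ or $D_{IV}$. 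Without that argument your induction has no engine.

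The second gap is structural and would defeat your vertexwise plan even with the combinatorics in hand. The move available for the braid-relation configuration $D_{IIIa}$ is not a direct-sum decomposition of a single web but the exchange isomorphism $C_p(D_{IIIa}) \oplus C_p(D_{IIIb}) \cong C_p(D'_{IIIa}) \oplus C_p(D'_{IIIb})$ of \autoref{reduce}(e). You cannot cancel the common summand to decompose $C_p(D_{IIIa})$ itself without a Krull--Schmidt property that is not established for these (infinitely generated) matrix factorizations. The paper sidesteps this by working with the whole chain complex in $\K(\hmf_w(R))$: it inserts a contractible two-term complex $C_p(D_{IIIb}) \xrightarrow{\id} C_p(D_{IIIb})$ in adjacent homological degrees and only then applies the exchange isomorphism. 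This is precisely why the theorem asserts an isomorphism in the homotopy category of complexes rather than a decomposition of each chain group; your reduction to a vertexwise statement forecloses the one move that handles $D_{III}$. Finally, citing \cite{khr1} for the MOY decompositions is insufficient here: the paper notes that those results concern a weaker theory (and the versions in \cite{rasmussen} hold only up to quasi-isomorphism), so the decompositions must be re-proved for a general potential $p$ in the genuine homotopy category --- this is the content of \autoref{reduce} and the linear-algebra input \autoref{stars4}, and it is where most of the work lies.
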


Before proving the theorem, we need to recall an important tool
for dealing with matrix factorizations: `excluding a variable'.
We quote Theorem 2.2 from \cite{khr3}.

\begin{theorem}\label{excluding}
Let R be a graded polynomial ring over $\Q$ and
and $u, v \in R[y]$ two polynomials, with $b$
being monic in $y$.  Furthermore, let $\bar w \in R$
and $M$ be a graded matrix factorization over $R[y]$ with potential 
$w = \bar w - uv$.  Then $M/(v)$ and $K(u;v) \otimes M$ are
isomorphic as objects of $hmf_w(R)$.  We say that
we \emph{exclude} the variable $y$ to obtain $M/(v)$ from $K(u;v) \otimes M$.
\end{theorem}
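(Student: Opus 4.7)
The plan is to realize $C_p(D)$ as the iterated mapping cone indexed by the crossings of $D$, and then to show that at each vertex of the resulting cube of resolutions the associated matrix factorization over $R$ is already isomorphic in $\hmf_w(R)$ to an object of $\hmf_2$. Because the structure maps of the cube are inherited from the definition of $C_p$, once we transport each vertex via such an isomorphism the cube's differentials become morphisms between objects of the full subcategory $\hmf_2$, giving a complex in $\K(\hmf_2)$ that is isomorphic to $C_p(D)$ in $\K(\hmf_w(R))$.

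The first step is to write $C_p(D)$ as a cube of resolutions. Using (\ref{tensor}) together with the fact that each $C_p(D_\pm)$ is a two-term complex whose terms are shifts of $C_p(D_r)$ and $C_p(D_s)$, we obtain an iterated mapping cone indexed by $\{r,s\}^N$, where $N$ is the number of crossings of $D$; at a vertex $v$ the associated matrix factorization is $C_p(D_v)$, for the planar web $D_v$ obtained by resolving each crossing according to $v$. Because $D$ is in braid form, the external endpoints of every $D_v$ are connected in the $D_r$-pattern, which is what ensures that only the two basic webs $D_r$ and $D_s$ need appear in the decomposition.

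The core of the argument is then the web-level claim: for each $D_v$ appearing in the cube, $C_p(D_v)$ is isomorphic in $\hmf_w(R)$ to a direct sum of shifts of $C_p(D_r)$ and $C_p(D_s)$. I would prove this by induction on the number of internal edges, locating at each step an edge $e$ one of whose endpoints supplies a vertex relation in which $x_e$ appears with coefficient $\pm 1$; by \autoref{excluding} we can then exclude $x_e$ and pass to a smaller web. Geometrically, successive exclusions implement the MOY-type simplifications --- contraction of an edge joining two singular vertices, digon removal, closed-loop evaluation --- and the braid-form hypothesis is what guarantees that such a reducible configuration exists whenever internal edges remain, so the reduction terminates at $D_r$ or $D_s$ tensored with a finite collection of closed loops.

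Each such loop contributes a tensor factor whose underlying module over $\Q$ is a finite direct sum of shifts, so absorbing the loop factors produces the required direct sum of shifts of $C_p(D_r)$ or $C_p(D_s)$; bookkeeping of $q$-shifts throughout the exclusion process recovers the grading predicted by the MOY state sum. The main obstacle is this web-reduction step: choosing a consistent order in which to exclude internal variables so that each exclusion genuinely corresponds to an MOY simplification, keeping the braid-form structure intact at every intermediate stage, and verifying that the resulting direct-sum decomposition really lands in $\hmf_2$ rather than in some larger category of webs with a different external pattern.
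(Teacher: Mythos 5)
Your proposal does not address the statement at hand. The statement you were asked to prove is \autoref{excluding}, the ``excluding a variable'' theorem: for $v \in R[y]$ monic in $y$ (the statement's ``$b$'' is a typo for $v$) and $M$ a graded matrix factorization over $R[y]$ with potential $\bar w - uv$, the factorizations $K(u;v)\otimes M$ and $M/(v)$ are isomorphic in $\hmf_w(R)$. What you have written is instead an argument for \autoref{\hmf2}, namely that the complex of a $2$-tangle in braid form lives in $\K(\hmf_2)$ --- a cube-of-resolutions decomposition followed by MOY-type web reductions. Worse, your argument explicitly invokes \autoref{excluding} (``by \autoref{excluding} we can then exclude $x_e$''), so as a proof of \autoref{excluding} it is circular: you are assuming the very statement you were asked to establish.

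For the record, the paper does not give an independent proof of \autoref{excluding} either; it quotes Theorem 2.2 of Khovanov and Rozansky's paper on the $\mathrm{SO}(2N)$ Kauffman polynomial, and only adds the observation that the quotient map $K(u;v)\otimes M \to M/(v)$ constructed there is homogeneous of $q$-degree $0$, so the ungraded statement upgrades to the graded category $\hmf_w(R)$. A self-contained proof would have to exploit the hypothesis that $v$ is monic in $y$ --- which makes $R[y]/(v)$ a free $R$-module of finite rank and lets one split off a contractible complement of $M/(v)$ inside $K(u;v)\otimes M$ --- none of which appears in your write-up. If you want to salvage your text, it belongs with \autoref{\hmf2}, where it would still need to be checked against the paper's rather different reduction scheme (the complexity function of Wu and the explicit isomorphisms of \autoref{reduce}, including the non-local step trading $D_{IIIa}$ for $D'_{IIIa}$ by adding and cancelling $D_{IIIb}$ summands).
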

The Theorem is only stated for ungraded matrix factorization in \cite{khr3},
but it is trivial to check that the quotient map $K(u;v) \otimes M \to M/(v)$
constructed in the proof is of degree $0$. \qed

We will also need another well-known result about Koszul matrix factorizations;
this is, for example, the $n = 2$ special case of Theorem 2.1 in \cite{khr3}.
\begin{theorem}\label{stars2}
Let $R$ be a graded polynomial ring over $\Q$ and $v_1, v_2 \in R$ be
relatively prime.  Then any two Koszul matrix factorizations of the form
$\koszul{* & v_1 \\ * & v_2}$ with the same potential are isomorphic.
\end{theorem}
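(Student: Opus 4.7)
The plan is to produce an explicit change-of-basis isomorphism. Write the two given factorizations as $K = \koszul{u_1 & v_1 \\ u_2 & v_2}$ and $K' = \koszul{u_1' & v_1 \\ u_2' & v_2}$, both with potential $w$. The first step is to extract the only algebraic input: subtracting $u_1 v_1 + u_2 v_2 = w = u_1' v_1 + u_2' v_2$ gives $(u_1 - u_1')v_1 = -(u_2 - u_2')v_2$. Since $R$ is a polynomial ring over $\Q$ (hence a UFD) and $v_1, v_2$ are coprime, $v_2$ must divide $u_1 - u_1'$; writing $u_1 - u_1' = \alpha v_2$ for some $\alpha \in R$ and cancelling $v_2$ then forces $u_2 - u_2' = -\alpha v_1$. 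A short degree count using $k_i = \deg v_i - \frac12 \deg w$ yields $\deg \alpha = \deg w - \deg v_1 - \deg v_2 = -(k_1+k_2)$, so $\alpha$ defines a map $R \to R\{k_1+k_2\}$ of $q$-degree zero.

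Next I would write down the isomorphism using the explicit order-two description recorded in \autoref{section-definitions}. Define $\phi : K \to K'$ by $\phi^0 = I$ on $C^0 = R\{k_1\} \oplus R\{k_2\}$ and $\phi^1 = \mmatrix{1}{-\alpha}{0}{1}$ on $C^1 = R\{k_1+k_2\} \oplus R$. A direct $2 \times 2$ matrix multiplication will verify both $\phi^1 d^0 = d'^0 \phi^0$ and $d'^1 \phi^1 = \phi^0 d^1$; each identity reduces to the substitution $u_1' = u_1 + \alpha v_2$, $u_2' = u_2 - \alpha v_1$ obtained in the first step. Since $\phi^1$ is upper triangular with unit diagonal it is invertible, and $\phi^0$ is the identity, so $\phi$ is an isomorphism in $\hmf_w(R)$.

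The only delicate point will be keeping the sign convention of the paper's explicit differentials $\smatrix{u_2 & u_1 \\ v_1 & -v_2}$ and $\smatrix{v_2 & u_1 \\ v_1 & -u_2}$ consistent with the candidate $\phi^1$; once the divisibility $v_2 \mid (u_1 - u_1')$ is in hand the isomorphism is essentially forced, and no appeal to \autoref{excluding} or to row operations on $\mathbf v$ is needed.
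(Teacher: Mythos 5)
Your argument is correct, but it follows a different route from the paper, which does not prove this statement at all: it is quoted as the $n=2$ special case of Theorem 2.1 of \cite{khr3}, where the general statement for a regular sequence $v_1,\dots,v_n$ is established. Your explicit change of basis is a sound, self-contained substitute in the only case needed here: the divisibility step ($R$ a UFD, $v_1,v_2$ coprime, so $(u_1-u_1')v_1=-(u_2-u_2')v_2$ forces $u_1-u_1'=\alpha v_2$ and $u_2-u_2'=-\alpha v_1$) is exactly the right algebraic input, and your degree count $\deg\alpha=-(k_1+k_2)$ correctly shows the resulting map has $q$-degree $0$. The one thing to tidy is the sign bookkeeping you yourself flagged: from $u_1-u_1'=\alpha v_2$ you get $u_1'=u_1-\alpha v_2$ and $u_2'=u_2+\alpha v_1$, not the signs you wrote two lines later. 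With the paper's conventions, where $R\{k_1\}\oplus R\{k_2\}$ sits in odd $\Z_2$-degree (the reverse of your labeling) and the differentials are $\mmatrix{u_2}{u_1}{v_1}{-v_2}$ and $\mmatrix{v_2}{u_1}{v_1}{-u_2}$, the correct intertwiner acts as the identity on $R\{k_1\}\oplus R\{k_2\}$ and as $P=\mmatrix{1}{\alpha}{0}{1}$ on $R\{k_1+k_2\}\oplus R$; one then checks directly that $P\,\mmatrix{u_2}{u_1}{v_1}{-v_2}=\mmatrix{u_2'}{u_1'}{v_1}{-v_2}$ and $\mmatrix{v_2}{u_1'}{v_1}{-u_2'}\,P=\mmatrix{v_2}{u_1}{v_1}{-u_2}$, and $P$ is unipotent, hence invertible. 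What the paper's citation buys is the general-$n$ statement and uniformity with the rest of the Koszul formalism; what your computation buys is transparency and independence from the regular-sequence machinery of \cite{khr3}.
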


In the same spirit, we show that a matrix factorization that is almost
the direct sum of two order-two Koszul matrix factorizations can be
transformed into an honest direct sum.
\begin{theorem}\label{stars4}
Let $R$ be a graded polynomial ring and $\dot R = R\{\dot k\}$ and 
$\ddot R = R\{\ddot k\}$ be free $R$-modules of rank $1$, 
then any graded matrix factorization of the form
\[\mf%
{\dot R\{k_a\} \oplus \dot R\{k_b\} \oplus \ddot R\{k_c\} \oplus \ddot R\{k_d\}}%
{\dot R\{k_a+k_b\} \oplus \dot R \oplus \ddot R\{k_c+k_d\} \oplus \ddot R}%
{V}%
{U}%
\]
with 
\[
U = \begin{pmatrix} b & * & 0 & * \\ a &  * & 0 & * \\ 0 & * & d & * \\ 0 & * & c & * \end{pmatrix}
\hspace{0.5cm}\text{and}\hspace{0.5cm}
V = \begin{pmatrix} * & * & * & * \\ a & -b & 0 & 0 \\ * & * & * & * \\ 0 & 0 & c & -d \end{pmatrix}
\text{,}
\]
where $\gcd(a,b) = \gcd(c,d) = 1$ and $k_x = \deg x - \frac{\deg w}2$ for $x \in \{a,b,c,d\}$,
is isomorphic to a matrix factorization of the form
\[ \koszul{* & a \\ * & b}\!\{\dot k\} \oplus \koszul{* & c \\ * & d}\!\{\ddot k\} \]
\end{theorem}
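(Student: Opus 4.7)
The plan is to exploit the matrix factorization equation $UV = w \cdot I$ to produce two explicit basis changes of $C^0$ that simultaneously block-diagonalize both $U$ and $V$, and then recognize each resulting $2 \times 2$ diagonal block as a Koszul factorization of the desired form.

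First I would extract the consequences of $UV = w \cdot I$ at the four off-diagonal entries indexed by $(i,j) \in \{1,2\} \times \{3,4\}$. Because rows $2$ and $4$ of $V$ are already supported on a single block, these equations reduce to linear relations among the cross entries $u_{14}, u_{24}, v_{13}, v_{14}$ alone. Eliminating $v_{14}$ from $b v_{14} = d u_{14}$ and $a v_{14} = d u_{24}$ forces $a u_{14} = b u_{24}$, and the hypothesis $\gcd(a,b) = 1$ then produces a uniquely determined $t_1 \in R$ with $u_{14} = b t_1$ and $u_{24} = a t_1$; the same equations pin down $v_{13} = -c t_1$ and $v_{14} = d t_1$. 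The symmetric computation at the entries $(i,j) \in \{3,4\} \times \{1,2\}$, using $\gcd(c,d) = 1$, produces $t_2 \in R$ with $u_{32} = d t_2$, $u_{42} = c t_2$, $v_{31} = -a t_2$, $v_{32} = b t_2$.

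Next, I would apply two elementary column operations to $U$: subtract $t_1$ times column~$1$ from column~$4$, and subtract $t_2$ times column~$3$ from column~$2$. Since the columns in question are $(b,a,0,0)^T$ and $(0,0,d,c)^T$, these operations zero out the four cross entries of $U$. The corresponding change of basis on $C^0$ induces the row operations ``add $t_1$ times row~$4$ of $V$ to row~$1$'' and ``add $t_2$ times row~$2$ of $V$ to row~$3$''; the derived values of $v_{13}, v_{14}, v_{31}, v_{32}$ are exactly what is needed to cancel the cross entries of $V$ as well, so both $U$ and $V$ become block-diagonal simultaneously.

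Finally, the resulting matrix factorization splits as a direct sum of two $2 \times 2$ matrix factorizations. On the $\dot R$-block, the new $U$ has the form $\mmatrix{b}{u_{12}'}{a}{u_{22}'}$, and imposing the matrix factorization equation on the block forces $V$ to take the form $\mmatrix{-u_{22}'}{u_{12}'}{a}{-b}$. Comparing with the explicit description of order-two Koszul factorizations given in \autoref{section-definitions}, this is literally the Koszul factorization $\koszul{u_{12}' & a \\ -u_{22}' & b}\{\dot k\}$, which is of the desired form $\koszul{* & a \\ * & b}\{\dot k\}$, and the $\ddot R$-block is analogous. The main obstacle I anticipate is a bookkeeping one: I must verify that $t_1$ and $t_2$, defined as quotients in $R$, are homogeneous of the correct $q$-degree so that the column operations define graded automorphisms of $C^0$. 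This should follow from the assumption that $U$ and $V$ are homogeneous, but the computation requires careful tracking of the shifts $\dot k$, $\ddot k$, and $k_a, \ldots, k_d$.
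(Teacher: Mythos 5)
Your proposal is correct and follows essentially the same route as the paper: you extract the proportionality of the cross entries from the off-diagonal blocks of $UV = w\cdot I$ using the gcd hypotheses, then kill them with a single change of basis on $C^0$ (your column operations on $U$ together with the induced row operations on $V$ are exactly the paper's conjugation by the elementary matrix $X = I + \alpha_2 E_{14} - \alpha_1 E_{32}$). The degree verification you defer is carried out explicitly in the paper in a few lines and works out exactly as you predict.
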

\begin{proof}
Let 
\[
U = \begin{pmatrix} b & * & 0 & b_2 \\ a & * & 0 & a_2 \\ 0 & d_1 & d & * \\ 0 & c_1 & c & * \end{pmatrix}
\hspace{0.5cm}\text{and}\hspace{0.5cm}
V = \begin{pmatrix} * & * & c_2 & -d_2 \\ a & -b & 0 & 0 \\ a_1 & -b_1 & * & * \\ 0 & 0 & c & -d \end{pmatrix}
\text{.}
\]
Computing the lower left and the upper right quadrant of $UV = wI$, we see that
$\mmatrix{d_1}{d}{c_1}{c} \mmatrix{a}{-b}{a_1}{-b_1} = 0$ and
$\mmatrix{b}{b_2}{a}{a_2} \mmatrix{c_2}{-d_2}{c}{-d} = 0$.
Since $\gcd(a,b) = \gcd(c,d) = 1$, the rank of each of these matrices is
at least $1$, so none of them can have rank $2$. Hence
$0 = \mathrm{det} \mmatrix{a}{-b}{a_1}{-b_1} = -b_1a+a_1b$ and there
exists an $\alpha \in R$ such that $a_1 = \alpha_1 a$ and $b_1 = \alpha_1 b$.
Similarly, we can find an $\alpha_2 \in R$, such that
$a_2 = \alpha_2 a$ and $b_2 = \alpha_2 b$, as well as $\beta_i \in R$ ($i \in \{1,2\}$)
such that $c_i = \beta_i c$ and $d_i = \beta_i d$.  The fact that the two matrix
products above are $0$ implies that $\beta_i = -\alpha_i$.  We now perform a change
of basis,
\[\xymatrix{
\dot R\{k_a\} \oplus \dot R\{k_b\} \oplus \ddot R\{k_c\} \oplus \ddot R\{k_d\} \ar@<0.4ex>[r]^{V } \ar@{=}[d] &
\dot R\{k_a+k_b\} \oplus \dot R \oplus \ddot R\{k_c+k_d\} \oplus \ddot R \ar@<0.4ex>[l]^{U } \ar@<0.4ex>[d]^{X} \\
\dot R\{k_a\} \oplus \dot R\{k_b\} \oplus \ddot R\{k_c\} \oplus \ddot R\{k_d\} \ar@<0.4ex>[r]^{V'} &
\dot R\{k_a+k_b\} \oplus \dot R \oplus \ddot R\{k_c+k_d\} \oplus \ddot R \ar@<0.4ex>[l]^{U'} \ar@<0.4ex>[u]^{X\inv} 
}\]
where
\[
X = \begin{pmatrix} 1 &&& \alpha_2 \\ & 1 && \\ & -\alpha_1 & 1 & \\ &&& 1 \end{pmatrix}
\text{,}\hspace{0.2cm}
U' = \begin{pmatrix} b & * & 0 & 0 \\ a &  * & 0 & 0 \\ 0 & 0 & d & * \\ 0 & 0 & c & * \end{pmatrix}
\hspace{0.1cm}\text{and}\hspace{0.2cm}
V' = \begin{pmatrix} * & * & 0 & 0 \\ a & -b & 0 & 0 \\ 0 & 0 & * & * \\ 0 & 0 & c & -d \end{pmatrix}
\text{,}
\]
the lower row being exactly the desired direct sum of Koszul matrix factorizations.

We still need to verify that $C$ is of degree 0:  We have
$\deg \alpha_1 = \deg a_1 - \deg a
= (\frac{\deg w}2 + k_a + \ddot k - k_c - k_d - \dot k) - (k_a + \frac{\deg w}2)
= \ddot k - \dot k - k_c - k_d$ and
$\deg \alpha_2 = \deg a_2 - \deg a
= (\frac{\deg w}2 + \dot k - k_b - \ddot k) - (k_a + \frac{\deg w}2)
= \dot k - \ddot k - k_a - k_b$, which
implies $\deg(-\alpha_1: \dot R \to \ddot R\{k_c+k_d\}) = \deg(\alpha_2: \ddot R \to \dot R\{k_a+k_b\}) = 0$.
\end{proof}

The following proposition is an analog of Lemma 4.10 and Propositions 4.3--4.6
in \cite{rasmussen} and Lemma 3 and Propositions 4--7 in \cite{khr2}.  Unfortunately,
we cannot deduce it from any of the previous results:  The theory introduced in
\cite{khr1} is weaker than what we consider here (in the $\mysl2$ case,
this is the difference between Khovanov Homology and Bar-Natan's
universal variant \cite{bar-natan}).  We also cannot use the results in 
\cite{rasmussen}, which are only shown to hold up to a notion of
quasi-isomorphism.  However, the proofs in Rasmussen' paper can be
modified to apply to our situation.

\begin{figure}
\begin{tabular}{rccrccrc}
$D_A$ &
\begin{tikzpicture}[baseline=0.4cm,thick,scale=0.5]
\draw[->] (0,0) -- (0,2);
\node[cr] at (0,1) {$a$};
\end{tikzpicture}
&& $D_r$ &
\begin{tikzpicture}[baseline=0.4cm,thick,scale=0.5]
\draw[->] (0,0) node[b]{$a$} -- (0,2) node[t]{$c$};
\draw[->] (1,0) node[b]{$b$} -- (1,2) node[t]{$d$};
\draw[densely dotted] (0,1)--(1,1);
\end{tikzpicture}
&& $D_s$ &
\begin{tikzpicture}[baseline=0.4cm,thick,scale=0.5]
\draw[->] (0,0) node[b]{$a$} to[out=90,in=-90] (1,2) node[t]{$d$};
\draw[->] (1,0) node[b]{$b$} to[out=90,in=-90] (0,2) node[t]{$c$};
\fill (0.5,1) circle (3.5pt);
\end{tikzpicture}
\\ $D_O$ &
\begin{tikzpicture}[baseline=0.4cm,thick,scale=0.5]
\draw[->] (0,-0.3) node[b]{$a$} -- (0,2.3) node[t]{$c$};
\draw (1,0) -- (1,2) to[out=90,in=90] (2,2) -- (2,0) to[out=-90,in=-90] (1,0);
\node[right] at (2,1) {$b$};
\draw[densely dotted] (0,1)--(1,1);
\end{tikzpicture}
&& $D_I$ &
\begin{tikzpicture}[baseline=0.4cm,thick,scale=0.5]
\draw[->] (0,-0.3) node[b]{$a$} -- (0,0) to[out=90,in=-90] (1,2) to[out=90,in=90] (2,2) -- (2,0) to[out=-90,in=-90] (1,0) to[out=90,in=-90] (0,2)--(0,2.3) node[t]{$c$};
\node[right] at (2,1) {$b$};
\fill (0.5,1) circle (3.5pt);
\end{tikzpicture}
&& $D_{II}$ &
\begin{tikzpicture}[baseline=0.7cm,thick,scale=0.5]
\draw[->] (0,0) node[b]{$a$} to[out=90,in=-90] (1,1.6) node[cr]{$y$} to[out=90,in=-90] (0,3.2) node[t]{$c$};
\draw[->] (1,0) node[b]{$b$} to[out=90,in=-90] (0,1.6) node[cl]{$x$} to[out=90,in=-90] (1,3.2) node[t]{$d$};
\fill (0.5,0.8) circle (3.5pt);
\fill (0.5,2.4) circle (3.5pt);
\end{tikzpicture}
\\ $D_{IIIa}$ &
\begin{tikzpicture}[baseline=0.9cm,thick,scale=0.5]
\draw[->] (0,0) node[b]{$a$} -- (0,1) to[out=90,in=-90] (2,4) node[t]{$f$};
\draw[->] (1,0) node[b]{$b$} to[out=90,in=-90] (2,2) node[cr]{$y$} to[out=90,in=-90] (1,4) node[t]{$e$};
\draw[->] (2,0) node[b]{$c$} to[out=90,in=-90] (0,3) -- (0,4) node[t]{$d$};
\fill (1.54,1.05) circle (3.5pt);
\fill (0.42,2) circle (3.5pt);
\fill (1.54,2.95) circle (3.5pt);
\node at (0.8,2.7) {$x$};
\node at (0.8,1.25) {$z$};
\end{tikzpicture}
&& $D'_{IIIa}$ &
\begin{tikzpicture}[baseline=0.9cm,thick,scale=0.5]
\draw[->] (2,0) node[b]{$c$} -- (2,1) to[out=90,in=-90] (0,4) node[t]{$d$};
\draw[->] (1,0) node[b]{$b$} to[out=90,in=-90] (0,2) to[out=90,in=-90] (1,4) node[t]{$e$};
\draw[->] (0,0) node[b]{$a$} to[out=90,in=-90] (2,3) -- (2,4) node[t]{$f$};
\fill (0.46,1.05) circle (3.5pt);
\fill (1.58,2) circle (3.5pt);
\fill (0.46,2.95) circle (3.5pt);
\end{tikzpicture}
&& $D_{IV}$ &
\begin{tikzpicture}[baseline=0.9cm,thick,scale=0.5]
\draw[->] (0,0) node[b]{$a$} -- (0,4) node[t]{$d$};
\draw[->] (1,0) node[b]{$b$} to[out=90,in=-90] (2,2) to[out=90,in=-90] (1,4) node[t]{$e$};
\draw[->] (2,0) node[b]{$c$} to[out=90,in=-90] (1,2) to[out=90,in=-90] (2,4) node[t]{$f$};
\draw[densely dotted] (0,2)--(1,2);
\fill (1.5,1) circle (3.5pt);
\fill (1.5,3) circle (3.5pt);
\end{tikzpicture}
\\ $D_{IIIb}$ &
\begin{tikzpicture}[baseline=0.65cm,thick,scale=0.5]
\draw[->] (0,0) node[b]{$a$} -- (0,1) to[out=90,in=-90] (1,3) node[t]{$e$};
\draw[->] (1,0) node[b]{$b$} -- (1,1) to[out=90,in=-90] (0,3) node[t]{$d$};
\draw[->] (2,0) node[b]{$c$} -- (2,3) node[t]{$f$};
\draw[densely dotted] (1,0.8)--(2,0.8);
\fill (0.5,2) circle (3.5pt);
\end{tikzpicture}
&& $D'_{IIIb}$ &
\begin{tikzpicture}[baseline=0.65cm,thick,scale=0.5]
\draw[->] (0,0) node[b]{$a$} -- (0,3) node[t]{$d$};
\draw[->] (1,0) node[b]{$b$} -- (1,1) to[out=90,in=-90] (2,3) node[t]{$f$};
\draw[->] (2,0) node[b]{$c$} -- (2,1) to[out=90,in=-90] (1,3) node[t]{$e$};
\draw[densely dotted] (0,0.8)--(1,0.8);
\fill (1.5,2) circle (3.5pt);
\end{tikzpicture}
\end{tabular}
\caption{Singular braid diagrams}
\label{figure-braid-diagrams}
\end{figure}
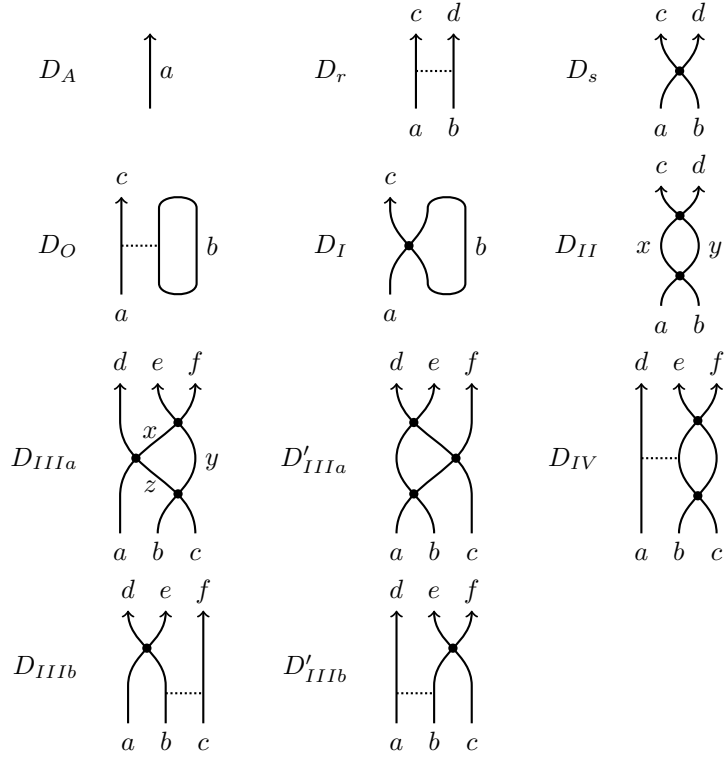

\pagebreak
\begin{proposition}\label{reduce}
The following isomorphisms hold in the homotopy category of matrix factorizations over the
external ring corresponding to the diagrams.
\begin{itemize}
\item[(a)] Let $D$ be a diagram of a fully resolved tangle, and $D'$ be a diagram
obtained from $D$ by replacing a smoothing of type $D_r$ (See \autoref{figure-smoothings})
by a pair of arcs without increasing the number of components.  Then $C_p(D) \cong C_p(D')$.
\item[(b)] Up to grading shifts, $C_p(D_O)$ is isomorphic to a direct sum of $n$ copies of $C_p(D_A)$.
\item[(c)] Up to grading shifts, $C_p(D_I)$ is isomorphic to a direct sum of $n-1$ copies of $C_p(D_A)$.
\item[(d)] Up to grading shifts, $C_p(D_{II}) \cong C_p(D_s) \oplus C_p(D_s)$.
\item[(e)] Up to grading shifts, $C_p(D_{IIIa}) \oplus C_p(D_{IIIb}) \cong C_p(D'_{IIIa}) \oplus C_p(D'_{IIIb})$.
\item[(f)] Up to grading shifts, $C_p(D_{IV}) \cong C_p(D'_{IIIb}) \oplus C_p(D'_{IIIb})$.
\end{itemize}
\end{proposition}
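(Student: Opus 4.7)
The overall strategy is to reduce each side of each asserted isomorphism to a normal form using Theorems \ref{excluding}, \ref{stars2}, and \ref{stars4}; this is the approach developed in \cite{khr1}, \cite{khr2}, and \cite{rasmussen} for MOY-type relations, adapted here to the setting of $\Z_2$-graded matrix factorizations over the external ring. By \autoref{gluing}, $C_p$ of any of the diagrams in \autoref{figure-braid-diagrams} is a tensor product of the Koszul factorizations associated to its individual vertices, and $R(D)$ is generated by the edge variables with one linear relation per vertex. The plan in each part is to use Theorem \ref{excluding} to eliminate the internal edges one at a time and then identify what remains.

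For parts (a)--(c), excluding the internal variables of a smoothing of type $D_r$ imposes the relations $x_c=x_a$ and $x_d=x_b$, so $C_p(D_r)$ collapses to the factorization of the two-arc diagram when we do not increase the number of components; this immediately gives (a). For (b), the only internal degree of freedom is the variable of the circle, say $x_b$; after imposing the vertex relation the external ring is $\Q[x_a]$ and the edge ring becomes free of rank $n$ over $\Q[x_a]$ with basis $1,x_b,x_b^2,\dotsc,x_b^{n-1}$ (one uses that $p'(x_b)$ has degree $n-1$ to truncate), so choosing this basis expresses $C_p(D_O)$ as $n$ shifted copies of $C_p(D_A)$. Part (c) is analogous; the singular crossing introduces one additional relation, and the surviving basis has rank $n-1$.

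For parts (d)--(f), the same template applies but with more bookkeeping. I would write both sides explicitly as Koszul factorizations with one row for each internal edge and use Theorem \ref{excluding} to remove the internal variables, then use Theorem \ref{stars2} to match rows with the same linear combination and Theorem \ref{stars4} where genuine splittings into two summands are required. In (d), after excluding the internal edges $x$ and $y$ both sides become two copies (with appropriate shifts) of the single-singular-crossing factorization $C_p(D_s)$. For (e), after variable elimination each side presents as a four-summand factorization of exactly the shape covered by Theorem \ref{stars4}, with the two pairs $(a,b)$ and $(c,d)$ playing the role of the relatively prime entries; matching the two resulting direct-sum decompositions produces the required isomorphism. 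For (f), the diagram $D_{IV}$ contains a configuration of the form handled by (d), and splitting along that configuration and applying (e) to reassociate the remaining Koszul factors yields two copies of $C_p(D'_{IIIb})$.

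The hard part will be the bookkeeping of $q$-grading shifts, $\Z_2$-grading signs, and the explicit Koszul presentations needed in parts (e) and (f): the splittings produced by Theorem \ref{stars4} are not canonical, and one has to exhibit an explicit change of basis, as in the proof of that theorem, in order to identify the two sides. These parts follow the pattern of Rasmussen's Propositions 4.3--4.6 in \cite{rasmussen} and Khovanov--Rozansky's Propositions 4--7 in \cite{khr2}; the main novelty is verifying that each step is compatible with our $\Z_2$-graded conventions and produces an honest isomorphism in $\hmf_w(R)$ rather than merely a quasi-isomorphism.
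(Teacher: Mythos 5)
Your overall strategy --- present everything as Koszul factorizations over the edge ring, exclude the internal variables via \autoref{excluding}, and split via \autoref{stars2} and \autoref{stars4} --- is exactly the paper's, and parts (a)--(c) are essentially right as you describe them. Two small corrections there: in (a) the hypothesis that the number of components does not increase is precisely what guarantees that at least one of $x_a,x_c$ is not a linear combination of external variables, so that there is a genuinely internal variable to which \autoref{excluding} can be applied; and in (b) the rank $n$ comes from excluding the circle variable against the relation $p'(x_a)-p'(x_b)=0$ supplied by the $*$-entry of the Koszul row (with $\deg p'=n$, not $n-1$ --- your stated basis is right but your degree count is off by one).

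The genuine gap is in (d) and (e). When you exclude an internal variable $x$ using a row whose $v$-entry is quadratic in $x$, such as $(x-x_a)(x-x_b)$, \autoref{excluding} does not hand you a factorization over $R_e$; it hands you one over the quadratic extension $R'=R[x]\slash(x^2-(x_a+x_b)x+x_ax_b)$, which is free of rank $2$ over $R$. The entire content of (d) and (e) is the descent from $R'$ to $R$: in (d) one writes the excluded factorization as $K(\alpha+\beta x;(x_c-x_a)(x_c-x_b))$ and argues $\beta=0$ because the potential must lie in $R$, whence the rank-two module splits as two shifted copies of $C_p(D_s)$; in (e) one must write the resulting $4\times 4$ differentials over $R$ explicitly, exhibit a change of basis putting them in the block form required by \autoref{stars4}, and verify the gcd hypotheses. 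Your proposal skips this descent step entirely, treating the output of exclusion as if it were already a direct sum of Koszul factorizations over $R_e$. Moreover, for (e) your plan to decompose both sides and ``match'' leaves open how the matching is done; the paper decomposes only one side, obtaining $C_p(D_{IIIa})\cong C_p(D'_{IIIb})\mfshift\oplus\Upsilon$, and then gets the decomposition of $C_p(D'_{IIIa})$ for free by showing that $\Upsilon$ is invariant under the reflection of \autoref{reflection} (using \autoref{stars2}). Without that symmetry argument, or a second explicit computation together with an identification of the two leftover summands, (e) is not proved. Finally, (f) follows from (a) and (d) alone --- the two singular crossings of $D_{IV}$ form a copy of $D_{II}$ --- so (e) is not needed there.
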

\begin{proof}
(a)
Since $D$ and $D'$ are connected, their external rings $R_e(D)$ and $R_e(D')$
are identical.  Since $R(D') = R(D)\slash(x_a=x_c)$ and $R(D) \cong R(D')[x]$
by Lemma 2.4 in \cite{rasmussen}, $x_a$ and $x_c$ are different elements of
$R(D)$.  If $x_a$ and $x_c$ were both linear combinations of external edges,
then their difference $x_c - x_a$ would be a linear combination of external
edges as well.  But $x_c - x_a \neq 0 \in R(D)$ and $x_c - x_a = 0 \in R(D')$,
which contradicts $R_e(D) = R_e(D')$.  Assume w.l.o.g. that $x_c$ is not a linear
combination of external edges.  Since $K(*; x_c - x_a)$ appears as a factor
of $C_p(D)$, we may exclude the variable $x_c$ to obtain $C_p(D')$.
\bigskip

(b) We have $R(D_O) = \Q[x_a,x_b,x_c]\slash(x_c-x_a)$, $R_e(D_O) = \Q[x_a,x_c]\slash(x_c-x_a)$ and
$R(D_A) = \Q[x_a]$, hence
$C_p(D_O)
= K\left(x_c-x_a; \frac{p(x_c)+p(x_b)-p(x_a)-p(x_b)}{x_c-x_a}\right)\mfshift
= K(0; p'(x_a) - p'(x_b))\mfshift
$.
Excluding the variable $x_b$, we obtain
\[C_p(D_O) \cong \bigoplus_{i=0}^{n-1} C_p(D_A)\mfshift\{2i\}\text{.}\]
\bigskip

(c) As in part (b), we have $R(D_s) = \Q[x_a,x_b,x_c]\slash(x_c-x_a)$, $R_e(D_s) = \Q[x_a,x_c]\slash(x_c-x_a)$ and
$R(D_A) = \Q[x_a]$, so
\begin{align*}
C_p(D_I) 
&= K\left(\tfrac{p(x_c)+p(x_b)-p(x_a)-p(x_b)}{(x_c-x_a)(x_c-x_b)}; (x_c-x_a)(x_c-x_b)\right)\{-1\} \\
&= K\left(\tfrac{p'(x_a)-p'(x_b)}{x_a-x_b}; 0\right)\{-1\} \\
&= K\left(0; \tfrac{p'(x_a)-p'(x_b)}{x_a-x_b}\right)\mfshift\{2-n\}
\text{,}
\end{align*}
so once again we may exclude $x_b$ to get
\[C_p(D_I) \cong \bigoplus_{i=0}^{n-2} C_p(D_A)\mfshift\{2-n+2i\}\text{.}\]
\bigskip

(d)
Choose labels as in \autoref{figure-braid-diagrams} and set $x \ce x_x$ and $y \ce
x_y$. As matrix factorizations over $R(D_{II})$,
\begin{align*}
C_p(D_{II})\{2\}
&=     \koszul{* & (x-x_a)(x-x_b) \\ * & (x_c-x)(x_c-y)} \\
&=     \koszul{* & (x-x_a)(x-x_b) \\ * & (x_c-x)(x-x_d)} \\
&\cong \koszul{* & (x-x_a)(x-x_b) \\ * & (x_c-x)(x-x_d) + (x-x_a)(x-x_b)} \\
&= \koszul{* & (x-x_a)(x-x_b) \\ * & (x_c-x_a)(x_c-x_b)}
\end{align*}
Let $R = R_e(D_{II}) = R_e(D_s)$.
Excluding the variable $x$, we get a matrix factorization $K(\alpha + \beta x; (x_c-x_a)(x_c-x_b))$
over the ring $R' = R[x]\slash(x^2 = (x_a+x_b)x - x_ax_b)$ whose potential
$(\alpha + \beta x)(x_c-x_a)(x_c-x_b)$ has to lie in $R$, hence $\beta = 0$.
As a graded $R$-module, $R' \cong R \oplus R\{2\}$, so
$C_p(D_{II}) \cong K_R(\alpha; (x_c-x_a)(x_c-x_b))\{-2\} \oplus K_R(\alpha; (x_c-x_a)(x_c-x_b))
\cong C_p(D_s)\{-1\} \oplus C_p(D_s)\{1\}$.
\bigskip

(e)
Choose labels as in \autoref{figure-braid-diagrams}, and set $x \ce x_x$, $y \ce x_y$
and $z \ce x_z$. Let $R = R_e(D_{IIIa})$, and note that $R(D_{IIIa}) \cong R[x]$.
As matrix factorizations over $R(D_{IIIa})$,
\begin{align*}
C_p(D_{IIIa})\{3\}
&=     \koszul{* & (x_d-x_a)(x_d-z) \\ * & (x_e-x)(x_e-y) \\ * & (z-x_b)(z-x_c)} \\
&=     \koszul{* & (x_d-x_a)(x_a-x) \\ * & (x_e-x)(x-x_f) \\ * & (x+x_d-x_a-x_b)(x+x_d-x_a-x_c)} \\
&\cong \koszul{* & (x_d-x_a)(x_a-x) \\ * & (x_e-x)(x-x_f) \\ * & x_ax_b+x_bx_c+x_cx_a - x_cx_d-x_dx_e-x_ex_f}
\text{,}
\end{align*}
where the last line is obtained from the previous one by adding the top right
and the center right entry to the bottom right entry.  Let
$p = x_e+x_f$, $q = x_ex_f$, $\alpha = x_d-x_a$ and
$\beta = x_ax_b+x_bx_c+x_cx_a - x_cx_d-x_dx_e-x_ex_f$,
so that the last line reads 
\[C_p(D_{IIIa})\{3\} \cong \koszul{* & ax_a-ax \\ * & -x^2 + px - q \\ * & b}\]
Using the second row to exclude the variable $x$, we obtain an order-two Koszul
matrix factorization over the ring $R' = R(D_{IIIa})\slash(x^2=px-q)$, which is given
explicitly (with respect to the standard decomposition of $R'$ as a free
$R$-module of rank two) as
$C_p(D_{IIIa}) \cong \mf{R_1}{R_0}{V}{U} \text{,}$
where
\begin{align*}
R_1 &= R\{3-n\} \oplus R\{5-n\} \oplus R\{3-n\} \oplus R\{5-n\} \text{,} \\
R_0 &= R\{6-2n\} \oplus R\{8-2n\} \oplus R \oplus R\{2\} \text{,}
\end{align*}
\[
A = \begin{pmatrix}
  \beta & 0 & * & * \\
  0 & \beta & * & * \\
  \alpha x_a & \alpha q & * & * \\
  -\alpha & \alpha(x_a-p) & * & *
\end{pmatrix}
\hspace{0.5cm}\text{and}\hspace{0.5cm}
B = \begin{pmatrix}
  * & * & * & * \\
  * & * & * & * \\
  \alpha x_a & \alpha q & -\beta & 0 \\
  -\alpha  & \alpha (x_a-p) & 0 & -\beta
\end{pmatrix}
\text{.}
\]

We apply the following change of basis
\[\raisebox{0.45\depth}{\xymatrix{
R_1  \ar@<0.4ex>[rr]^{V } \ar@<0.4ex>[d]^{X    } && R_0  \ar@<0.4ex>[ll]^{U } \ar@<0.4ex>[d]^{X    } \\
R'_1 \ar@<0.4ex>[rr]^{V'} \ar@<0.4ex>[u]^{X\inv} && R'_0 \ar@<0.4ex>[ll]^{U'} \ar@<0.4ex>[u]^{X\inv}}}
\text{, where }
X = \begin{pmatrix}
  1 & p-x_a & 0 & 0 \\
  0 & 0 & 0 & 1 \\
  0 & 1 & 0 & 0 \\
  0 & 0 & 1 & x_a \\
\end{pmatrix}
\text{,}
\]
\begin{align*}
R'_1 &= R\{3-n\} \oplus R\{5-n\} \oplus R\{5-n\} \oplus R\{3-n\} \text{ and} \\
R'_0 &= R\{6-2n\} \oplus R\{2\} \oplus R\{8-2n\} \oplus R \text{.}
\end{align*}
$C$ is of $q$-degree $0$; a straightforward computation shows that
\begin{align*}
U' &= \begin{pmatrix}
  \beta & * & 0 & * \\
  -\alpha  & * & 0 & * \\
  0 & * & \beta & * \\
  0 & * & \alpha(x_a-x_e)(x_a-x_f) & *
\end{pmatrix}
\hspace{0.5cm}\text{and}\\
V' &= \begin{pmatrix}
  * & * & * & * \\
  -\alpha & -\beta & 0 & 0 \\
  * & * & * & * \\
  0 & 0 & \alpha(x_a-x_e)(x_a-x_f) & -\beta
\end{pmatrix}
\text{.}
\end{align*}

We compute $\gcd(\alpha,\beta) = \gcd(\alpha, (x_d-x_c)\alpha-\beta) = \gcd((x_d-x_a), (x_b-x_e)(x_b-x_f)) = 1$, hence
by symmetry $\gcd(x_a-x_e,\beta) = 1$ and $\gcd(x_a-x_f,\beta) = 1$ as well.  Therefore,
$\gcd(\alpha,\beta) = \gcd(\alpha(x_a-x_e)(x_a-x_f), \beta) = 1$, so we may apply \autoref{stars4} to get
\begin{align*}
C_p(D_{IIIa})\{3\}
& \cong \koszul{* & -\alpha \\ * & \beta}\!\{2\} \oplus \koszul{* & \alpha(x_a-x_e)(x_a-x_f) \\ * & \beta} \\
& \cong \koszul{* & -\alpha \\ * & \beta+(x_e+x_f-x_a)\alpha}\!\{2\} \oplus \koszul{* & \alpha(x_a-x_e)(x_a-x_f) - x_a^2\beta \\ * & \beta} \\
& \cong \koszul{* & x_a-x_d \\ * & x_bx_c-x_ex_f}\!\{2\} \oplus \koszul{* & x_ax_bx_c-x_dx_ex_f \\ * & \beta}
\end{align*}

It is easy to see that the first summand is isomorphic to
$C_p(D'_{IIIb})\mfshift\{3\}$.  Denote the second
summand by $\Upsilon\{3\}$, so that we have $C_p(D_{IIIa}) \cong C_p(D'_{IIIb})\mfshift \oplus \Upsilon$.
By \autoref{reflection}, reflection along the middle strand
is given by the ring homomorphism $\bar \cdot: R \to R$,
$\bar x_a = -x_c$,
$\bar x_b = -x_b$,
$\bar x_c = -x_a$,
$\bar x_d = -x_f$,
$\bar x_e = -x_e$ and
$\bar x_f = -x_d$.
Since $\bar \Upsilon \cong \Upsilon$ by \autoref{stars2} under this isomorphism, we obtain that
$C_p(D'_{IIIa}) \cong C_p(D_{IIIb})\mfshift \oplus \Upsilon$, which implies claim (e).
\bigskip

(f) This follows immediately from (a) and (d).
\end{proof}

We will collectively refer to diagrams of type $D_r$ and $D_s$ as \emph{resolved crossings}.

\begin{proof} \textit{(of \autoref{\hmf2})}
We will prove the theorem by repeatedly reducing $C_p(D)$ according to
\autoref{reduce}.
At each step, we get a complex of matrix factorizations whose
underlying graded object is $\bigoplus_i C_p(D_i)$ for some
collection of singular diagrams in braid form. Following Wu~\cite{wu}, we
define a complexity function on singular braids by
$i(D) = \sum_j i_j$ where $j$ runs over all resolved crossings
in the diagram and $i_j$ is $1$ for an oriented smoothing and one plus the
number of strands to the left of the crossing for a singular crossing.
We show that each step of the reduction process decreases either
the maximum complexity of diagrams $D_i$ or the number of diagrams that have
maximum complexity.  This reduction can be performed as long as the maximum
complexity is greater than $1$.  The only connected diagrams of complexity $1$
are $D_r$ and $D_s$, so if the maximum complexity is $1$,
$C_p(D)$ is the direct sum of shifts of $C_p(D_r)$ and $C_p(D_s)$ and the Lemma
follows.  To perform the reduction, choose a diagram of maximum complexity.  The
Lemma below guarantees that either $D_{IIIa}$ or one of the diagrams on the left
hand side of \autoref{reduce}(a)-(d) or (f) is a subdiagram of $D$.
In the latter case we can simply replace the complex on the left hand side by the
the one on the right-hand side; notice that this reduces the number of diagrams
of this complexity.  If there is a subdiagram of type $D_{IIIa}$, we are given
a complex of the form
\[
\dots
C^{k-1} \xrightarrow{\smatrix{\alpha \\ \beta}}
C^k \oplus C_p(D_{IIIa}) \xrightarrow{\smatrix{\gamma & \delta}}
C^{k+1} \dots \text{,}
\]
which is (up to a grading shift) isomorphic in $\K(\hmf_w(R))$ to
\[
\dots
C^{k-1} \oplus C_p(D_{IIIb}) \xrightarrow{\smatrix{\alpha \\ \beta \\ 1}}
C^k \oplus C_p(D_{IIIa}) \oplus C_p(D_{IIIb}) \xrightarrow{\smatrix{\gamma & \delta & 0}}
C^{k+1} \dots \text{,}
\]
which is in turn isomorphic to
\[
\dots
C^{k-1} \oplus C_p(D_{IIIb}) \xrightarrow{}
C^k \oplus C_p(D'_{IIIa}) \oplus C_p(D'_{IIIb}) \xrightarrow{}
C^{k+1} \dots \text{,}
\]
so we once again were able to reduce the number of diagrams of the given complexity.
\end{proof}

\begin{lemma}
If $D$ is a connected singular (open) braid diagram of complexity greater than $1$, then
it contains at least one of the following subdiagrams:
\begin{itemize}
\item[(i)] A resolved crossing of type $D_r$ or $D_s$ in rightmost  position
which is the only resolved crossing in this column,
\item[(ii)] a diagram $D_r$ which has the property that $D$ stays connected when $D_r$ is removed,
\item[(iii)] a diagram of type $D_{II}$, $D_{III}$ or $D_{IV}$.
\end{itemize}
\end{lemma}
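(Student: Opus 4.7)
The plan is to perform a case analysis based on the crossings in the rightmost column of the braid, that is, the column between positions $n-1$ and $n$, where $n$ is the total number of strands of the braid. By connectedness of $D$, this column must contain at least one resolved crossing: otherwise the strand at position $n$ would have no incident vertex in the underlying four-valent graph, and together with its top and bottom closure arcs it would form an isolated loop component. If this column contains exactly one resolved crossing, then conclusion (i) holds directly.

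Otherwise let $X_1$ (lower) and $X_2$ (upper) be two consecutive resolved crossings in the rightmost column. Between them the strand at position $n$ runs straight, so every crossing strictly between $X_1$ and $X_2$ lies in a column of index at most $n-2$. If either $X_1$ or $X_2$ is of type $D_r$, I claim that (ii) applies: the two arcs of that $D_r$ are joined by a path running along the straight strand at position $n$ through the other resolved crossing, since a $D_s$ connects all four incident edges at its four-valent vertex while a $D_r$ connects its two arcs via its own dotted line. Removing the dotted line of the identified $D_r$ therefore preserves connectedness of $D$.

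We may thus assume $X_1$ and $X_2$ are both of type $D_s$. If no column-$(n-2)$ crossing lies strictly between them, then the strands at positions $n-1$ and $n$ both run straight between $X_1$ and $X_2$, which together with $X_1, X_2$ form a $D_{II}$ subdiagram, yielding (iii). Otherwise let $Y$ be the lowest column-$(n-2)$ crossing strictly between $X_1$ and $X_2$; between $X_1$ and $Y$ the strands at positions $n-2, n-1, n$ all run straight. If $Y = D_r$ and the strand at position $n-2$ is otherwise straight between $X_1$ and $X_2$, the configuration is precisely $D_{IV}$; if $Y = D_s$ and no further crossings interfere, then $X_1, Y, X_2$ form the backbone of a $D_{IIIa}$ subdiagram. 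The main obstacle is ensuring exhaustiveness in the presence of further column-$(n-2)$ or column-$(n-3)$ crossings that could disrupt these local patterns: in such cases one either identifies a smaller sub-region where one of the named $D_{III}$ or $D_{IV}$ patterns still appears, or one extracts a $D_r$ whose arcs are joined by the surrounding crossings and applies (ii) to it.
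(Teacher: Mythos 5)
Your treatment of the rightmost column is sound and matches the paper's opening moves: connectedness forces at least one resolved crossing in that column; a single one gives (i); if there are at least two and one of them is a $D_r$, its two arcs remain joined through a neighbouring resolved crossing in the same column, so it can be removed without disconnecting $D$, giving (ii); and two adjacent singular crossings give a $D_{II}$. The gap is in the last case, and you have essentially named it yourself: when the two consecutive rightmost singular crossings $X_1$, $X_2$ have several crossings in column $n-2$ (and possibly further left) between them, the proposal to ``identify a smaller sub-region where one of the named patterns still appears, or extract a $D_r$ and apply (ii)'' is a description of the desired conclusion rather than an argument. Concretely, if the lowest column-$(n-2)$ crossing $Y$ between $X_1$ and $X_2$ is singular but another column-$(n-2)$ crossing sits above it, the arc of position $n-1$ running from $Y$ up to $X_2$ is interrupted, so no $D_{III}$ or $D_{IV}$ is visible at $X_1, Y, X_2$; nothing in the write-up says what to do then, which $D_r$ could be extracted instead, or why its removal would preserve connectedness.

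The missing ingredient is the induction on the braid index that the paper uses to close this case. The portion of the diagram strictly between $X_1$ and $X_2$ contains no crossings in the rightmost column, hence is itself a singular braid diagram on one fewer strand, and the lemma can be applied to it recursively: outcomes (ii) and (iii) for the sub-braid are inherited directly by $D$, while outcome (i) --- a unique resolved crossing in column $n-2$ between $X_1$ and $X_2$ --- combines with $X_1$ and $X_2$ (crossings in columns $n-3$ and further left do not meet positions $n-1$ or $n$, so they can be isotoped out of the way) to produce exactly a $D_{IIIa}$ or a $D_{IV}$. Without this recursion, or some other terminating mechanism that works one column at a time, your case analysis does not exhaust all configurations, so as written the proof is incomplete.
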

\begin{proof}
We prove the lemma by induction on the braid index.  If the braid index is $2$
and $i(D) > 1$, then we either have a subdiagram of type $D_r$, which can be
removed without disconnecting the diagram, or we have at least two subdiagrams
of type $D_s$ and none of type $D_r$, so we can find $D_{II}$ as a subdiagram.
If the braid index is greater than $2$, we may assume that there are at least
two resolved crossings in rightmost position.  We may also assume that we
have no subdiagrams of type $D_r$ in rightmost position, since we could remove
them without disconnecting the diagram.  If two such singular crossings are
adjacent, we have found $D_{II}$ as a subdiagram. Otherwise choose the topmost
two such singular crossings and apply the induction hypothesis to the part of
the braid between those two singular crossings, giving us either a subdiagram
of the required type inside this part of the braid or, potentially after
performing an isotopy, a diagram of type $D_{III}$ or $D_{IV}$.
\end{proof}

\section{Mutation invariance of the inner tangle}\label{section-mutation}

The following simple lemma is at the heart of the proof.  We will use it to
show that invariance under mutation is essentially a property of the category
of matrix factorizations associated to 2-tangles.  The functors $\FF$
and $\GG$ are necessary to account for grading shifts; we suggest
that the reader think of them as identity functors and of $f$ as a natural
transformation in the center of the category.

\begin{lemma}\label{cone}
Let $\CC$ be an additive category and let $\FF$, $\GG$ and $\bar \cdot$ be
additive endofunctors of $\CC$, where $\bar \cdot: \CC \to \CC$ is required to
be the identity on objects and an involution on morphisms.  Furthermore, let
$f$ be a natural transformation from $\FF$ to $\GG$ and let
$\partial: \Hom_\CC(A,B) \to \Hom_\CC(\GG A,\FF B)$ be an operation defined on the
$\Hom$-sets of $\CC$ with the following properties
\begin{enumerate}
\item $\partial$ is $\Z$-linear, i.e.\ for $\phi, \psi \in \Hom_\CC(A,B)$,
$\partial (\phi - \psi) = \partial \phi - \partial \psi$.
\item For $\phi \in \Hom_\CC(A,B)$, 
$\GG(\phi - \bar \phi) = f_B \, \partial \phi$ and $\FF(\phi - \bar \phi) = \partial \phi \, f_A$.
\item Composable morphisms $\phi \in \Hom_\CC(A,B)$ and $\psi \in \Hom_\CC(B,C)$ satisfy a perturbed Leibniz rule:
$
\partial (\psi \, \phi)
= \partial \psi \, \GG      \phi + \FF \bar \psi \, \partial \phi 
= \partial \psi \, \GG \bar \phi + \FF      \psi \, \partial \phi
$.
\end{enumerate}

If $C$ is a chain complex over $\CC$ with differential $d$, then $f$
gives rise to a chain morphism $f_C: \FF C \to \GG C$.  Let
$\bar C$ be the chain complex obtained by applying $\bar \cdot$ to the
differential of $C$.  Then the mapping cones $\Cone(f_C)$ and $\Cone(f_{\bar C})$
are isomorphic.

\end{lemma}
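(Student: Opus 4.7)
The plan is to write down an explicit chain isomorphism $\Phi\colon \Cone(f_C) \to \Cone(f_{\bar C})$ built from $\partial$ applied to the differential $d$. Taking the convention that $\Cone(f_C)$ has underlying graded object $\FF C[1] \oplus \GG C$ with differential
$$D_C = \begin{pmatrix} -\FF d & 0 \\ f_C & \GG d \end{pmatrix}$$
(and similarly $D_{\bar C}$ with $d$ replaced by $\bar d$), I would set
$$\Phi = \begin{pmatrix} \id & \partial d \\ 0 & \id \end{pmatrix}.$$
Here $\partial d$ denotes the componentwise application of $\partial$ to the differential, giving maps $\GG C^k \to \FF C^{k+1} = (\FF C[1])^k$, so the upper-right entry lives in the correct homological degree. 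The inverse of $\Phi$ is the same matrix with $-\partial d$ in place of $\partial d$, so once $\Phi$ is shown to be a chain map it is automatically an isomorphism.

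To check $\Phi D_C = D_{\bar C}\Phi$, I expand the product entrywise. The lower-left entry reduces to $f_C = f_{\bar C}$, which holds because $f$ is a natural transformation depending only on the underlying objects, and $C$ and $\bar C$ share these objects; only the differentials differ. The two diagonal entries reduce to property (2), namely $\FF(d-\bar d) = \partial d \cdot f_C$ and $\GG(d-\bar d) = f_C \cdot \partial d$. The upper-right entry demands
$$\partial d \cdot \GG d + \FF\bar d \cdot \partial d = 0,$$
which is exactly $\partial(d \cdot d)$ by the first form of the perturbed Leibniz rule in property (3); by linearity (property (1)) and $d^2 = 0$, this vanishes.

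The main obstacle is really psychological: guessing the upper-triangular form of $\Phi$ and recognizing that the off-diagonal compatibility coincides with $\partial(d^2) = 0$. Once $\Phi$ is written down, no homotopy corrections or delicate cancellations are needed; the three axioms on $\partial$ have been calibrated precisely so that $\Phi$ is an on-the-nose chain isomorphism between the two cones, and all four matrix-entry identities drop out one by one from the numbered properties.
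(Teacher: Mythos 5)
Your proof is correct and is essentially the paper's argument: the paper uses the same upper-triangular isomorphism with off-diagonal entry $\partial d$ (written as $\partial d\,\varepsilon_C$ because it encodes the cone signs via an involution $\varepsilon_C$ rather than by negating $\FF d$), and verifies exactly the same four entrywise identities, with the diagonal entries coming from property (2) and the upper-right entry from $\partial(d^2)=0$. The only difference is the cosmetic choice of sign convention for the mapping cone.
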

\begin{proof}
We adopt the following conventions for the mapping cone. Let $\varepsilon_C: C
\to C$ be the identity in even homological heights and the negative of the
identity in odd heights.  Note that $\varepsilon_C$ commutes with morphisms of
even homological degree and anti-commutes with morphisms of odd degree.  Then
the mapping cone $\Cone(f_C)$ is given by $\FF C[-1] \oplus \GG C$ with differential
$\mmatrix{\FF d}{}{f_C\varepsilon_C}{\GG d}$. Since $d$ here has degree 1, it is easy
to check that this defines a differential.

We claim that the horizontal arrows in the diagram below define an isomorphism
between $\Cone(f_C)$ and $\Cone(f_{\bar C})$.
\[
\xymatrix@R=1.3cm@C=2.3cm{
\FF C[-1] \oplus \GG C
\ar[r]^{\mmatrix{I}{\partial d \, \varepsilon_C}{}{I}}
& \FF C[-1] \oplus \GG C \\
\FF C[-1] \oplus \GG C
\ar[r]_{\mmatrix{I}{\partial d \, \varepsilon_C}{}{I}}
\ar[u]^{\mmatrix{\FF d}{}{f_C\varepsilon_C}{\GG d}}
& \FF C[-1] \oplus \GG C
\ar[u]_{\mmatrix{\FF \bar d}{}{f_{\bar C}\varepsilon_{\bar C}}{\GG \bar d}}
}
\]
$\mmatrix{I}{\partial d \, \varepsilon}{}{I}$ is invertible with inverse
$\mmatrix{I}{-\partial d \, \varepsilon}{}{I}$,
so it remains to check that it defines a chain morphism, i.e.\ that
$\mmatrix{\FF \bar d}{}{f\varepsilon}{\GG \bar d} \mmatrix{I}{\partial d \, \varepsilon}{}{I}
=\mmatrix{I}{\partial d \, \varepsilon}{}{I} \mmatrix{\FF d}{}{f\varepsilon}{\GG d}$.
We have $\deg(\FF d) = \deg(\GG d) = 1$, $\deg(\partial d) = 1 + (-1) - 0 = 0$ and
$\deg f = 0 + 0 - (-1) = 1$, so this follows from
$\FF \bar d = \FF d - \partial d \, f = \FF d + \partial d \, \varepsilon f \varepsilon$, from
$\FF \bar d \, \partial d \, \varepsilon = -\partial d \, \GG d \, \varepsilon = \partial d \, \varepsilon \, \GG d$ and from
$f \varepsilon \partial d \, \varepsilon + \GG \bar d = f \, \partial d + \GG \bar d = \GG d$, where the second
identity follows from $0 = \partial (d^2) = \partial d \, \GG d + \FF \bar d \, \partial d$.
\end{proof}

\begin{lemma}
Let $R = \Q[x_a,x_b,x_c,x_d]\slash(x_a+x_b=x_c+x_d)$ and let $\bar \cdot$ be
the ring homomorphism defined by
$\bar x_a = -x_b$, $\bar x_b = -x_a$, $\bar x_c = -x_d$ and $\bar x_d = -x_c$,
which induces an involution functor on $\hmf_2$.  Let $\FF$ be the
grading shift functor $\{2\}$ and $\GG$ be the identity functor.
Then there is a differential $\partial$ on the morphism spaces of $\hmf_2$
satisfying the hypothesis of the previous lemma.
\end{lemma}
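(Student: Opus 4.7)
The plan is to set $f_A \colon A\{2\} \to A$ to be multiplication by $a \ce x_a + x_b$ (which equals $x_c + x_d$ in $R$) and, for any morphism $\phi \colon A \to B$ in $\hmf_2$, to define $\partial\phi$ by applying the operation $P \mapsto (P - \bar P)/a$ entry-wise to any matrix representation of $\phi$. The key observation making this definition sound is that $P - \bar P \in (a)$ for every $P \in R$: passing to the quotient $R/(a) \cong \Q[x_a,x_c]$ (after eliminating $x_b = -x_a$ and $x_d = -x_c$), the involution $\bar\cdot$ becomes the identity, since $\bar x_a = -x_b = x_a$ and $\bar x_c = -x_d = x_c$ there. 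Because $R$ is a domain in which $a$ is not a zero-divisor, the quotient $(P - \bar P)/a$ is an unambiguous element of $R$, and the polynomial degree drops by $\deg a = 2$, exactly matching the target shift $\{2\}$.

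Next I would check that $\partial\phi$ is a genuine morphism of matrix factorizations. The second observation needed is that all differentials of the generating objects of $\hmf_2$ are multiplications by bar-invariant elements: $\overline{x_c - x_a} = -x_d + x_b = x_c - x_a$ via the relation $x_a + x_b = x_c + x_d$, the element $x_c x_d - x_a x_b$ is manifestly bar-invariant, and $w = \sum_k a_{2k}(x_c^{2k}+x_d^{2k}-x_a^{2k}-x_b^{2k})$ is bar-invariant because only even powers of the variables appear (this is exactly where the hypothesis on $p$ is used). Hence if $\phi$ commutes with $d_A$ and $d_B$, then so does $\bar\phi$, so $\phi - \bar\phi$ commutes with them as well, and dividing entry-wise by the central element $a$ preserves this commutation. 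The analogous computation with a null-homotopy $\phi = d_B h + h d_A$ produces $\tilde h \ce (h-\bar h)/a$ with $\partial\phi = d_B \tilde h + \tilde h d_A$, showing that $\partial$ descends to homotopy classes.

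With these two points in place, the axioms of \autoref{cone} are essentially formal. $\Z$-linearity (i) is immediate. Condition~(ii) asserts $a \cdot \partial\phi = \phi - \bar\phi$, which is the definition, and commutativity of $R$ gives the equivalent identity $\partial\phi \cdot a = \phi - \bar\phi$. The perturbed Leibniz rule (iii) follows from the algebraic identities
\[
\psi\phi - \bar\psi\bar\phi \;=\; (\psi - \bar\psi)\phi + \bar\psi(\phi - \bar\phi) \;=\; \psi(\phi - \bar\phi) + (\psi - \bar\psi)\bar\phi,
\]
each of which, divided by $a$, yields one of the two equivalent forms of the Leibniz identity in the lemma. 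No individual step presents a serious obstacle; the content is really in the two observations highlighted above, namely that $a$ divides every $P - \bar P$ in $R$ and that every differential appearing in $\hmf_2$ is bar-invariant.
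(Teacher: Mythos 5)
Your proposal is correct and follows essentially the same route as the paper: define $\partial r = (r-\bar r)/(x_a+x_b)$ on the ring (the paper justifies divisibility by substituting $x_b=-x_a$, which is your quotient-by-$(a)$ argument in different clothing), take $f$ to be multiplication by $x_a+x_b$, apply $\partial$ entry-wise to morphisms, and use bar-invariance of the Koszul differentials and of $w$ to see that this yields morphisms of matrix factorizations, descends to homotopy classes, and satisfies the perturbed Leibniz rule. The only presentational difference is that the paper extracts the bar-invariance of $u$, $v$, $w$ from the one-crossing case of \autoref{reflection} rather than recomputing it, which changes nothing of substance.
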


\begin{proof}
$\bar \cdot: R \to R$ is well-defined since $\overline {x_a+x_b} = -x_b-x_a =
-x_d-x_c = \overline{x_c+x_d}$.  R is isomorphic to the polynomial ring in
$x_a$, $x_b$ and $x_c$.  Substituting $x_b = -x_a$ in any expression of the
form $r - \bar r$, we obtain 0, hence $r - \bar r$ is divisible by $x_a+x_b$
and we may define $\partial$ on $R$ by $\partial r = \frac {r - \bar r}
{x_a+x_b}$.  Viewing the ring $R$ as an additive category with one element, it
is straightforward to check that $\partial$ satisfies the hypothesis of
\autoref{cone}.

The differential descends to a differential on $\hmf_2$.  First note that objects in $\hmf_2$ are direct
sums of one-term Koszul factorizations $K(u;v)$ with potential $w = p(x_c)+p(x_d)-p(x_a)-p(x_b)$.  It 
follows from the proof of the one-crossing case of \autoref{reflection} that
$\partial w = 0$ and $\partial v = 0$ for the two choices of $v$, that
is $v = x_c-x_a$ and $v = (x_c-x_a)(x_c-x_b)$.  This implies 
$0 = \partial w = \partial u \, v + \bar u \, \partial v = \partial u \, v$,
hence $\partial v = 0$ since $R$ does not have zero divisors.  We define the
differential of a morphism of such matrix factorizations,
\[\xymatrix@C=2cm{
R\{\frac{\deg v' - \deg u'}2\} \ar@<0.4ex>[r]^-{v'} &
R \ar@<0.4ex>[l]^-{u'} \\
R\{\frac{\deg v - \deg u}2 + \deg z\} \ar@<0.4ex>[r]^-{v} \ar[u]^{y} &
R\{\deg z\} \ar@<0.4ex>[l]^-{u} \ar[u]_{z}
}\]
to simply be
\[\xymatrix@C=2cm{
R\{\frac{\deg v' - \deg u'}2 + 2\} \ar@<0.4ex>[r]^-{v'} &
R\{2\} \ar@<0.4ex>[l]^-{u'} \\
R\{\frac{\deg v - \deg u}2 + \deg z\} \ar@<0.4ex>[r]^-{v} \ar[u]^{\partial y} &
R\{\deg z\} \ar@<0.4ex>[l]^-{u} \ar[u]_{\partial z}
}\]
This is a morphism of matrix factorizations since
$\partial y \, u = \partial(yu) = \partial(u'z) = u' \, \partial z$ and
$\partial z \, v = \partial(zv) = \partial(v'y) = v' \, \partial y$.

Since any null-homotopic morphism
\[\xymatrix@C=2cm{
R\{\frac{\deg v' - \deg u'}2\} \ar@<0.4ex>[r]^-{v'} &
R \ar@<0.4ex>[l]^-{u'} \\
R\{\frac{\deg w}2 + \deg h\} \ar@<0.4ex>[r]^-{v} \ar[u]^{u' h + k v} &
R\{\deg h + \deg u\} \ar@<0.4ex>[l]^-{u} \ar[u]_{h u + v' k}
}\]
is sent to the null-homotopic morphism
\[\xymatrix@C=2cm{
R\{\frac{\deg v' - \deg u'}2 + 2\} \ar@<0.4ex>[r]^-{v'} &
R\{2\} \ar@<0.4ex>[l]^-{u'} \\
R\{\frac{\deg w}2 + \deg h\} \ar@<0.4ex>[r]^-{v} \ar[u]^{u' \, \partial h + \partial k \, v} &
R\{\deg h + \deg u\} \ar@<0.4ex>[l]^-{u} \ar[u]_{\partial h \, u + v' \, \partial k}
}\]
$\partial$ descends to a differential on $\hmf_2$.

The natural transformation $\phi$ is given by
\[\xymatrix@C=2cm{
R\{\frac{\deg v - \deg u}2\} \ar@<0.4ex>[r]^-{v} &
R \ar@<0.4ex>[l]^-{u} \\
R\{\frac{\deg v - \deg u}2 + 2 \} \ar@<0.4ex>[r]^-{v} \ar[u]^{x_a+x_b} &
R\{2\} \ar@<0.4ex>[l]^-{u} \ar[u]_{x_a+x_b}
}\]
Since we can view (representatives of) morphisms in $\hmf_2$ as pairs of elements of $R$,
the fact that $R$ satisfies the hypothesis of \autoref{cone} implies that $\hmf_2$ does
as well.
\end{proof}

\section{Proof of the main Theorem}\label{section-proof}
Before we can finish the proof, we need to borrow another Lemma from \cite{rasmussen}.
\begin{lemma}(Lemma 5.16 in \cite{rasmussen})\label{homotopy}
Let $D$ be the diagram of a single crossing with endpoints as in \autoref{figure-orientations}(a).
Then the maps $x_b: C_p(D)\{2\} \to C_p(D)$ and $x_c: C_p(D)\{2\} \to C_p(D)$ are homotopic.
Since $x_a + x_b = x_c + x_d$, this of course implies that $x_a$ and $x_d$ are homotopic as well.
\end{lemma}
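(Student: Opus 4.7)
The plan is to exhibit an explicit chain homotopy. For $D = D_+$, recall from Section~\ref{section-reflection} that $C_p(D_+)$ is the two-term complex of matrix factorizations $A \xrightarrow{f} B$, where $B = K(u_1; v_1)$ sits in homological degree $0$ with $v_1 = x_c - x_a$, $A = K(u_2; v_2)$ sits in homological degree $-1$ with $v_2 = (x_c - x_a)(x_c - x_b)$, and $u_i = w/v_i$. The chain differential has components $f^0 = 1 \colon A^0 \to B^0$ and $f^1 = x_c - x_b \colon A^1 \to B^1$.

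I would propose the homotopy $h \colon B\{2\} \to A$ defined by reversing the roles of the two parities in $f$; namely,
\[ h^0 = x_c - x_b, \qquad h^1 = 1. \]
A routine check confirms that $h$ is a morphism of matrix factorizations: the two commutation squares reduce respectively to the identity $u_1 = u_2 (x_c - x_b)$ (which is forced by $u_i v_i = w$ together with $v_2 = v_1 (x_c - x_b)$) and to $v_2 = v_1 (x_c - x_b)$ itself. All grading shifts work out because $\deg(x_c - x_b) = 2$, matching the $\{2\}$-shift on the source.

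Since $h$ lives entirely between the two Koszul terms and is trivial on $A$, the chain-homotopy relation $dh + hd = x_c - x_b$ reduces on $A$ to $hf = (x_c - x_b)\,\mathrm{id}_A$ and on $B$ to $fh = (x_c - x_b)\,\mathrm{id}_B$. Both identities are immediate from $f^0 h^0 = h^0 f^0 = x_c - x_b$ and $f^1 h^1 = h^1 f^1 = x_c - x_b$. The case $D = D_-$ is handled by the analogous formula applied to the complex oriented in the opposite direction (the chain differential now going $B \to A\{-2\}$, with $h$ defined by the same parity-swap prescription).

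The second assertion, that $x_a$ and $x_d$ are also homotopic, follows formally from the defining relation $x_a + x_b = x_c + x_d$ in the external ring, which gives $x_a - x_d = x_c - x_b$ in $R$; the same $h$ then witnesses $x_a \simeq x_d$ for free. There is essentially no obstacle; the only ``idea'' is guessing the right $h$, which is suggested by the symmetric role that the two parities of a Koszul factorization play in the construction of the chain differential $f$.
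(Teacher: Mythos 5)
Your proof is correct and is essentially the paper's own argument: the homotopy $h$ with components $h^0 = x_c - x_b$, $h^1 = 1$ is exactly the differential $d_-$ of the negative crossing (as one sees from the explicit description of $C_p(D_-)$ in Section~\ref{section-reflection}), and your identities $fh = hf = (x_c-x_b)\,\id$ are precisely the paper's observation that $d_-d_+ = d_+d_- = x_c - x_b$, so that the differential of the opposite crossing serves as the null-homotopy. The grading and morphism checks you supply are the routine verifications the paper leaves implicit.
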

\begin{proof}
Let $d_+: C_p(D_r) \to C_p(D_s)$ be the differential of a positive crossing and
$d_-: C_p(D_s) \to C_p(D_r)$ be the differential of a negative crossing. Clearly,
$d_-d_+ = x_c-x_b: C_p(D_r) \to C_p(D_r)$ and $d_+d_- = x_c-x_b: C_p(D_s) \to C_p(D_s)$, so 
$d_\mp$ is a null-homotopy for $x_c-x_b: C_p(D_\pm) \to C_p(D_\pm)$.  We ignored
$q$-gradings above, the reader can easily check that the proof applies in the
graded setting as well.
\end{proof}

We are now ready to prove \autoref{main}.  Given a pair of mutants $L_1$ and $L_2$, 
we may assume, by \autoref{braid-form}, that the mutation is realized 
as a mutation of type $R_z$ whose inner tangle diagram $D$ is in braid form.
By \autoref{\hmf2}, there is an object $C$ in $\K(\hmf_2)$ such that
$C_p(D) \cong C$ in $\K(\hmf_w(R))$.  Applying the ring isomorphism $\bar \cdot$,
we obtain an isomorphism $\overline{C_p(D)} \cong \bar C$, hence by
\autoref{reflection}, $C_p(\bar D) \cong \bar C$.  Applying \autoref{cone}, we
obtain that $\Cone(x_a+x_b: C\{2\} \to C)$ is isomorphic in $\hmf_2$,
and hence in $\hmf_w(R)$ to $\Cone(x_a+x_b: \bar C\{2\} \to \bar C)$.  Taking
the tensor product over $\Q[x_b, x_c, x_d]$ with the complex associated to the
outer tangle, we get an isomorphism 
\[ \Cone(x_a+x_b: C_p(L^\circ_1)\{2\} \to C_p(L^\circ_1)) \cong 
   \Cone(x_a+x_b: C_p(L^\circ_2)\{2\} \to C_p(L^\circ_2)) \]
by \autoref{gluing}, where $L^\circ_1$ and $L^\circ_2$ denote $L_1$ and $L_2$ cut open at $a$,
respectively.  Because we consider only positive mutation, $x_a$
and $x_b$ lie on the same component of both $L_1$ and $L_2$, so $x_a$ and $x_b$
are homotopic by repeated application of \autoref{homotopy}. Hence we get an
isomorphism
\[
\Cone(2x_a: C_p(L^\circ_1)\{2\} \to C_p(L^\circ_1)) \cong
\Cone(2x_a: C_p(L^\circ_2)\{2\} \to C_p(L^\circ_2))
\]
and thus
\[
\Cone(x_a: C_p(L^\circ_1)\{2\} \to C_p(L^\circ_1)) \cong
\Cone(x_a: C_p(L^\circ_2)\{2\} \to C_p(L^\circ_2))
\text{.}\]
Tensoring with $K(p'(x_a); 0)$ we get
$
\Cone(\widehat C_p(L^\circ_1)) \cong
\Cone(\widehat C_p(L^\circ_2))
$, which implies that the reduced homologies of $L_1$ and $L_2$ are isomorphic.
\qed

\begin{proof}[Proof (of \autoref{homfly-pt})]
This follows directly from Theorem 1 in \cite{rasmussen}, which asserts that
for sufficiently large $n$, the $\sln$ homology of a knot is a regraded version
of its HOMFLY-PT homology.  It is clear that we can recover the triple
grading of HOMFLY-PT homology by choosing $n$ large enough.
\end{proof}

\begin{bibdiv}
\begin{biblist}

\bib{bar-natan}{article}{
   author={Bar-Natan, Dror},
   title={Khovanov's homology for tangles and cobordisms},
   journal={Geom. Topol.},
   volume={9},
   date={2005},
   pages={1443--1499 (electronic)},
   issn={1465-3060},
}

\bib{bar-natan-mutation}{misc}{
   author={Bar-Natan, Dror},
   title={Mutation Invariance of Khovanov Homology},
   note={\emph{The Knot Atlas},
     \url{http://katlas.math.toronto.edu/drorbn/index.php?title=Mutation\_Invariance\_of\_Khovanov\_Homology}},
   date={2005},
}

\bib{birman-brendle}{article}{
   author={Birman, Joan S.},
   author={Brendle, Tara E.},
   title={Braids: a survey},
   conference={
      title={Handbook of knot theory},
   },
   book={
      publisher={Elsevier B. V., Amsterdam},
   },
   date={2005},
   pages={19--103},
}

\bib{bloom}{article}{
   author={Bloom, Jonathan M.},
   title={Odd Khovanov homology is mutation invariant},
   journal={Math. Res. Lett.},
   volume={17},
   date={2010},
   number={1},
   pages={1--10},
   issn={1073-2780},
}

\bib{champanerkar-kofman}{article}{
   author={Champanerkar, Abhijit},
   author={Kofman, Ilya},
   title={On mutation and Khovanov homology},
   journal={Commun. Contemp. Math.},
   volume={10},
   date={2008},
   number={suppl. 1},
   pages={973--992},
   issn={0219-1997},
}

\bib{genus-2}{article}{
   author={Dunfield, Nathan M.},
   author={Garoufalidis, Stavros},
   author={Shumakovitch, Alexander},
   author={Thistlethwaite, Morwen},
   title={Behavior of knot invariants under genus 2 mutation},
   journal={New York J. Math.},
   volume={16},
   date={2010},
   pages={99--123},
   issn={1076-9803},
}

\bib{gornik}{misc}{
   author={Gornik, Bojan},
   title={Note on Khovanov link cohomology},
   date={2004},
   note={arXiv:math/0402266v2 [math.QA]},
}

\bib{instanton}{misc}{
   author={Kronheimer, Peter B.},
   author={Mrowka, Tomasz S.},
   author={Ruberman, Daniel},
   title={Mutations and instanton knot homology},
   date={2010},
   note={arXiv:1012.1192v1 [math.GT]},
}

\bib{kirk-livingston}{article}{
   author={Kirk, Paul},
   author={Livingston, Charles},
   title={Concordance and mutation},
   journal={Geom. Topol.},
   volume={5},
   date={2001},
   pages={831--883 (electronic)},
   issn={1465-3060},
}

\bib{krasner-equivariant}{article}{
   author={Krasner, Daniel},
   title={Equivariant ${\rm sl}(n)$-link homology},
   journal={Algebr. Geom. Topol.},
   volume={10},
   date={2010},
   number={1},
   pages={1--32},
   issn={1472-2747},
}

\bib{krasner-integral}{article}{
   author={Krasner, Daniel},
   title={Integral HOMFLY-PT and sl(n)-Link Homology},
   journal={International Journal of Mathematics and Mathematical Sciences},
   date={2010},
   note={Article ID 896879},
}

\bib{khr1}{article}{
   author={Khovanov, Mikhail},
   author={Rozansky, Lev},
   title={Matrix factorizations and link homology},
   journal={Fund. Math.},
   volume={199},
   date={2008},
   number={1},
   pages={1--91},
   issn={0016-2736},
}

\bib{khr2}{article}{
   author={Khovanov, Mikhail},
   author={Rozansky, Lev},
   title={Matrix factorizations and link homology. II},
   journal={Geom. Topol.},
   volume={12},
   date={2008},
   number={3},
   pages={1387--1425},
   issn={1465-3060},
}

\bib{khr3}{article}{
   author={Khovanov, Mikhail},
   author={Rozansky, Lev},
   title={Virtual crossings, convolutions and a categorification of the
   ${\rm SO}(2N)$ Kauffman polynomial},
   journal={J. G\"okova Geom. Topol. GGT},
   volume={1},
   date={2007},
   pages={116--214},
   issn={1935-2565},
}

\bib{khovanov-frobenius}{article}{
   author={Khovanov, Mikhail},
   title={Link homology and Frobenius extensions},
   journal={Fund. Math.},
   volume={190},
   date={2006},
   pages={179--190},
   issn={0016-2736},
}

\bib{khovanov-sl3}{article}{
   author={Khovanov, Mikhail},
   title={sl(3) link homology},
   journal={Algebr. Geom. Topol.},
   volume={4},
   date={2004},
   pages={1045--1081},
   issn={1472-2747},
}

\bib{links-gould}{article}{
   author={De Wit, David},
   author={Links, Jon},
   title={Where the Links-Gould invariant first fails to distinguish
   nonmutant prime knots},
   journal={J. Knot Theory Ramifications},
   volume={16},
   date={2007},
   number={8},
   pages={1021--1041},
   issn={0218-2165},
}

\bib{lobb}{article}{
   author={Lobb, Andrew},
   title={A slice genus lower bound from ${\rm sl}(n)$ Khovanov-Rozansky
   homology},
   journal={Adv. Math.},
   volume={222},
   date={2009},
   number={4},
   pages={1220--1276},
   issn={0001-8708},
}

\bib{mackaay-vaz}{article}{
   author={Mackaay, Marco},
   author={Vaz, Pedro},
   title={The universal ${\rm sl}_3$-link homology},
   journal={Algebr. Geom. Topol.},
   volume={7},
   date={2007},
   pages={1135--1169},
   issn={1472-2747},
}

\bib{mackaay-vaz-calculation}{misc}{
   author={Mackaay, Marco},
   author={Vaz, Pedro},
   title={The reduced HOMFLY-PT homology for the Conway and the Kinoshita-Terasaka knots},
   date={2008},
   note={arXiv:0812.1957v1 [math.GT]},
}

\bib{moy}{article}{
   author={Murakami, Hitoshi},
   author={Ohtsuki, Tomotada},
   author={Yamada, Shuji},
   title={Homfly polynomial via an invariant of colored plane graphs},
   journal={Enseign. Math. (2)},
   volume={44},
   date={1998},
   number={3-4},
   pages={325--360},
   issn={0013-8584},
}

\bib{odd-khovanov}{misc}{
   author={Ozsv\'ath, Peter},
   author={Rasmussen, Jacob},
   author={Szab\'o, Zoltan},
   title={Odd Khovanov homology},
   date={2007},
   note={arXiv:0710.4300v1 [math.QA]}
}

\bib{rasmussen}{misc}{
   author={Rasmussen, Jacob},
   title={Some differentials on Khovanov-Rozansky homology},
   date={2006},
   note={arXiv:math\slash0607544v2 [math.GT]},
}

\bib{rasmussen-s}{misc}{
   author={Rasmussen, Jacob},
   title={Khovanov homology and the slice genus},
   date={2006},
   note={arXiv:math\slash0402131v1 [math.GT]},
}

\bib{vogel}{article}{
   author={Vogel, Pierre},
   title={Representation of links by braids: a new algorithm},
   journal={Comment. Math. Helv.},
   volume={65},
   date={1990},
   number={1},
   pages={104--113},
   issn={0010-2571},
}

\bib{wehrli1}{misc}{
   author={Wehrli, Stephan M.},
   title={Khovanov Homology and Conway Mutation},
   date={2003},
   note={arXiv:math\slash0301312v1 [math.GT]},
}

\bib{wehrli2}{article}{
   author={Wehrli, Stephan M.},
   title={Mutation invariance of Khovanov homology over $\mathbb F_2$},
   journal={Quantum Topol.},
   volume={1},
   date={2010},
   number={2},
   pages={111--128},
   issn={1663-487X},
}

\bib{wu}{article}{
   author={Wu, Hao},
   title={Braids, transversal links and the Khovanov-Rozansky theory},
   journal={Trans. Amer. Math. Soc.},
   volume={360},
   date={2008},
   number={7},
   pages={3365--3389},
   issn={0002-9947},
}

\bib{wu-s}{article}{
   author={Wu, Hao},
   title={On the quantum filtration of the Khovanov-Rozansky cohomology},
   journal={Adv. Math.},
   volume={221},
   date={2009},
   number={1},
   pages={54--139},
   issn={0001-8708},
}

\bib{yamada}{article}{
   author={Yamada, Shuji},
   title={The minimal number of Seifert circles equals the braid index of a
   link},
   journal={Invent. Math.},
   volume={89},
   date={1987},
   number={2},
   pages={347--356},
   issn={0020-9910},
}

\end{biblist}
\end{bibdiv}

\end{document}